\DeclareMathOperator{\id}{id}
\newcommand{\defl}{\mathrel{\mathop:}=}
\theoremstyle{plain}
\newtheorem{thm}{Theorem}[section]
\newtheorem*{thm*}{Theorem}
\newtheorem{prop}[thm]{Proposition}
\newtheorem{lemma}[thm]{Lemma}
\newtheorem{cor}[thm]{Corollary}
\theoremstyle{definition}
\newtheoremstyle{citing}{}{}{\itshape}{}{\bfseries}{.}{ }{\thmnote{#3}}
\theoremstyle{citing}
\newtheoremstyle{citingdfn}{}{}{}{}{\bfseries}{.}{ }{\thmnote{#3}}
\theoremstyle{citingdfn}
\newtheorem*{question*}{}
\numberwithin{equation}{section}
\keywords{Hermite rings, stably free modules}
\subjclass[2020]{13C10, 19A13, 19C20}
\author{Daniel Sch\"appi}
\thanks{This research was supported by the DFG grant: SFB 1085 ``Higher invariants.''}
\address{Fakult{\"a}t f{\"u}r Mathematik,
Universit{\"a}t Regensburg,
93040 Regensburg,
Germany}
\email{daniel.schaeppi@ur.de}
\title{A counterexample to the Hermite ring conjecture}
\begin{document}

\begin{abstract}

 We show that there exists a stably free module over a polynomial ring which is not extended from the ground ring. This provides a counterexample to the Hermite ring conjecture.
  
\end{abstract}

\maketitle

\section{introduction}

 Let $R$ be a commutative ring. Recall that a finitely generated projective $R$-module $P$ is called \emph{stably free} if there exist $n,m \in \mathbb{N}$ such that $P \oplus R^n \cong R^m$. The ring $R$ is called \emph{Hermite} if all stably free $R$-modules are free. The Hermite ring conjecture is the conjecture that the polynomial ring $R[t]$ is Hermite whenever $R$ is Hermite. This was posed as a question by Lam in the introduction to \cite[p.~XI]{LAM_OLD}.
 
 Swan asked if all stably free $R[t]$ modules for an arbitrary ring $R$ are necessarily extended from $R$ (see Question~(B) in \cite[p.~114]{SWAN}). These two statements turn out to be equivalent, see for example \cite[Proposition~V.3.4]{LAM}. In this article, we will show that these two questions have a negative answer.
 
 \begin{thm}\label{thm:counterexample_to_Swan_question}
 There exists a commutative ring $R$ and a stably free $R[t]$-module $P$ which is not extended from $R$.
 \end{thm}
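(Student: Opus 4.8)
The plan is to translate Theorem~\ref{thm:counterexample_to_Swan_question} into a concrete question about unimodular rows, and then to build a ground ring on which the relevant obstruction is forced to be non-zero. Recall the standard dictionary. A unimodular row $v=(v_{0},\dots,v_{n})\in\mathrm{Um}_{n+1}(R[t])$ defines a surjection $R[t]^{n+1}\to R[t]$ whose kernel $P$ is a rank-$n$ stably free module, any section giving an isomorphism $P\oplus R[t]\cong R[t]^{n+1}$; conversely every stably free $R[t]$-module equipped with such an isomorphism arises this way, and $P$ is free if and only if $v$ is completable to a matrix in $GL_{n+1}(R[t])$. A short computation with the restriction at $t=0$ shows that $P$ is extended from $R$ if and only if $v$ is $GL_{n+1}(R[t])$-equivalent to the extension of $v|_{t=0}$ along $R\to R[t]$; in particular, once one arranges $v|_{t=0}=e_{1}$, the module $P$ is extended from $R$ if and only if it is free. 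It therefore suffices to produce a commutative ring $R$, an integer $n\ge 2$, and a unimodular row $v\in\mathrm{Um}_{n+1}(R[t])$ with $v|_{t=0}=e_{1}$ that is not completable over $R[t]$: the corresponding module $P$ is stably free, has free restriction at $t=0$, but is not free, and hence is not extended from $R$.

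The next step is to pin down where the ground ring must live. By Quillen's local--global patching, a non-extended module over $R[t]$ localizes to a non-extended module over $R_{\mathfrak p}[t]$ for some prime $\mathfrak p$, so one may assume $R$ local; then every unimodular row over $R$ is completable, one may change coordinates so that $v|_{t=0}=e_{1}$, and ``$P$ is extended from $R$'' becomes literally synonymous with ``$P$ is free''. The classical positive results say precisely which local $R$ are off limits: Lindel's and Popescu's theorems exclude regular $R$ containing a field, Bass's stable-range bound forces $R$ to be non-Noetherian or of infinite Krull dimension unless the rank $n$ is very small, and the Quillen--Suslin circle (Suslin's completion and monic-inversion theorems) rules out rings that are, locally in $t$, assembled from fields or principal ideal domains. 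A witness must therefore be a non-regular local ring lying outside all of these, and I would obtain one as a carefully engineered filtered colimit $R=\colim_{i}R_{i}$ --- equivalently, as a ring of functions on a suitable ind-scheme --- carrying a compatible system of unimodular rows $v_{i}\in\mathrm{Um}_{n+1}(R_{i}[t])$ with $v_{i}|_{t=0}=e_{1}$, arranged so that no finite stage $R_{i}[t]$ contains a completion of $v_{i}$, nor an elementary homotopy $v_{i}\sim_{E_{n+1}(R_{i}[t])}e_{1}$, nor a patching certificate witnessing triviality. Then ``$P$ is extended'' is contradicted in the colimit although it is never contradicted at any finite stage.

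The main obstacle is exactly this last point: showing that non-completability genuinely survives the passage to $R=\colim_{i}R_{i}$, rather than appearing to fail at each finite stage for ``accidental'' reasons. For this I would want a functorial invariant of the pair $(R,v)$ that vanishes whenever $v$ is completable (equivalently, $P$ extended) but is non-zero on the constructed row, and then I would establish its non-vanishing by a direct computation. The natural candidate is a topological realization: send $(R,v(t))$ to the induced family of real or complex unimodular rows --- a map into a Stiefel manifold, which is homotopy equivalent to a sphere --- and read off a non-zero class in a homotopy group of that sphere, equivalently a non-zero element of a reduced $K^{0}$-group. An alternative packaging of the same obstruction uses Mennicke-type symbols, or the $\mathbb{A}^{1}$-homotopy sheaf $\pi^{\mathbb{A}^{1}}_{n-1}(\mathbb{A}^{n}\setminus\{0\})$ localized along the ideal $(t)\subset R[t]$, producing an ``$N$-type'' (nil) obstruction that is by design invisible to all of the classical positive results. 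Once the non-vanishing of such an invariant is in hand, the reduction of the first paragraph converts it into the required $R$ and $P$, and Theorem~\ref{thm:counterexample_to_Swan_question} follows.
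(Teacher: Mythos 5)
There is a genuine gap: your proposal never actually constructs anything. After correctly setting up the reduction to finding a non-completable unimodular row (or, more precisely, a non-extended stably free module), you write that you ``would obtain one as a carefully engineered filtered colimit'' and that you ``would want a functorial invariant,'' but neither the ring nor the row nor the invariant is ever produced. The entire content of the theorem is precisely to exhibit these objects and carry out the non-vanishing computation; a description of the desiderata is not a proof. In particular, none of the alternative invariants you float (Stiefel-manifold homotopy, Mennicke symbols, $\mathbb{A}^{1}$-homotopy sheaves) is turned into a concrete argument, and the first of these does not even make sense for an arbitrary commutative ring without substantial extra structure. (It is also worth noting that Mennicke symbols pull in the opposite direction here --- in the paper they are used to show the relevant matrix \emph{is} stably elementary in $\mathrm{GL}$, not to obstruct completability.)

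The paper's actual route is quite different in spirit and is fully explicit: it takes $R=k[a,x,y]/(a^{2}+xy)$ with $k$ an imperfect field of characteristic~$2$, writes down a specific matrix $M(u)\in\mathrm{SL}_{2}(R)$, and shows it lies in the kernel of $K_{1}\mathrm{Sp}(R)\to K_{1}(R)$ --- trivial in ordinary $K_{1}$ (via Mennicke symbols) but nontrivial in symplectic $K_{1}$. The nontriviality in $K_{1}\mathrm{Sp}$ is established by adapting Gubeladze's Steinberg-group argument to the symplectic Steinberg group, using (i) homotopy invariance of $K_{2}\mathrm{Sp}$ over fields (via the Calm\`es et al.\ Grothendieck--Witt / $L$-theory machinery) and (ii) a Witt-ring calculation over $k[s,t]/(s^{2},st,t^{2})$ resting on van der Kallen's presentation of $K_{2}\mathrm{Sp}$ and Knebusch's cancellation theorem. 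The matrix is then fed into a Milnor square to produce a stably free but non-free module over $R'=k[a,x,y,t]/(t^{2}+t(a^{2}+xy))$, and the grading of $R'$ supplies a contraction showing the resulting module over $R'[s]$ is not extended. None of this scaffolding --- the symplectic obstruction, the specific quadric cone, the Milnor patching, the grading trick --- appears in your proposal, and without something in their place the argument does not close.
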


 We prove this result by exploiting a connection between the Hermite ring conjecture and symplectic $K$-theory established in \cite[\S 8]{SCHAEPPI_SYMPLECTIC}. There it is shown that there exists a ring $R$ such that the Hermite ring conjecture fails if the forgetful homomorphism
 \[
 K_1 \mathrm{Sp}(R) \rightarrow K_1(R)
 \]
 from symplectic $K$-theory to ordinary $K$-theory is not injective (see \cite[Theorem~1.3]{SCHAEPPI_SYMPLECTIC}). Here $K_1 \mathrm{Sp}(R)$ is the quotient $\mathrm{
 Sp}(R) \slash \mathrm{Ep}(R)$ of the infinite symplectic group modulo the elementary symplectic group $\mathrm{Ep}(R)$ (see for example \cite[\S 1]{KOPEIKO} for a definition). Thus our goal is to construct a symplectic matrix over $R$ which is stably elementary when considered as a matrix in $\mathrm{SL}_n$, but does not lie in $\mathrm{Ep}(R)$. Such a matrix can then be used in a Milnor patching diagram to obtain the desired counterexample.
 
 The hard part of the argument is to show that the matrix in question represents a nontrivial element of $K_1 \mathrm{Sp}(R)$. To prove this, we adapt an argument of Gubeladze \cite{GUBELADZE}. In \cite[Proposition~5.1]{GUBELADZE}, Gubeladze showed that certain elements of $K_1(R)$ are nontrivial. We use the same scaffolding in our argument, though we replace the use of the Steinberg group for $\mathrm{GL}(R)$ by the symplectic Steinberg group. This argument relies on two fundamental ingredients: we need to establish a homotopy invariance property for $K_2 \mathrm{Sp}$ over fields, and we need to show that a specific element in the symplectic Steinberg group of a certain quotient of $k[s,t]$ is nonzero. (Gubeladze uses the quotient $k[s,t] \slash (st)$; we will use $k[s,t] \slash (s^2,st,t^2)$ instead.)
 
 The first of these two facts can be established using the recent work of Calm{\`e}s, Dotto, Harpaz, Hebestreit, Land, Moi, Nardin, Nikolaus, and Steimle \cite{CALMES_ET_AL_I, CALMES_ET_AL_II, CALMES_ET_AL_III, HEBESTREIT_STEIMLE, CALMES_HARPAZ_NARDIN}, see \S \ref{section:homotopy_invariance}.
 
 To establish the second fact, we construct a homomorphism
 \[
 \rho \colon K_2 \mathrm{Sp} \bigl(k[s,t]\slash (s^2,st,t^2)\bigr) \rightarrow W\bigl(k[s,t]\slash (s^2,st,t^2)\bigr)
 \]
 to the Witt ring of symmetric bilinear spaces and we show that the element in question is sent to a nontrivial element in the Witt ring. The homomorphism is constructed using a result of van der Kallen \cite{VdK} and the nontriviality in the Witt ring is shown with the aid of a theorem of Knebusch \cite{KNEBUSCH}.
 
 \subsection*{Acknowledgments}
 I am very grateful to Markus Land for proving the necessary homotopy invariance property of $K_2 \mathrm{Sp}$ over fields.

 \section{The matrix}\label{section:matrix}

 Let $k$ be a field of characteristic $2$ and let $u \in k^{\times}$. Let $R \defl k[a,x,y] \slash a^2+xy$. The goal of this section is to give a proof strategy to show that the matrix
 \[
 M_0(u)= \begin{pmatrix}
 1+ (1+u)a^2 & (1+u)(1+a)y \\ (1+u^{-1})(1+a)x & 1+(1+u^{-1})a^2
 \end{pmatrix} \in \mathrm{SL}_2(R)
 \]
 gives a nontrivial element of the kernel of $K_1 \mathrm{Sp}(R) \rightarrow K_1(R)$ if $u$ is not a square. The missing ingredients in this proof strategy will be supplied in the later sections. That $M_0(u)$ lies in the kernel of this homomorphism follows from a computation with Mennicke symbols as defined in \cite[\S VI.3]{LAM}.
 
 \begin{lemma}\label{lemma:matrix_1_stably_elementary}
 The matrix
 \[
  \begin{pmatrix}
  M_0(u) & 0 \\ 0 & 1
  \end{pmatrix} \in \mathrm{SL}_3(R)
 \]
 is elementary. 
 \end{lemma}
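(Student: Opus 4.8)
The plan is to exhibit the $3\times 3$ matrix as a product of elementary matrices by an explicit sequence of row and column operations. The organising principle is a Mennicke symbol computation in the sense of \cite[\S VI.3]{LAM}: one shows that the symbol attached to the first column of $M_0(u)$ is trivial, and this triviality is forced by the relation $a^2 = xy$ in $R$ together with the characteristic-$2$ identities $(1+u)(1+u^{-1}) = u + u^{-1}$, $1 + (1+u) = u$ and $1 + (1+u^{-1}) = u^{-1}$.

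First I would record the rewritings that let the defining relation do its work. Since $\operatorname{char} k = 2$ we have $(1+a)^2 = 1 + a^2 = 1 + xy$ in $R$, and therefore
\[
1 + (1+u)a^2 = u + (1+u)(1+a)^2, \qquad 1 + (1+u^{-1})a^2 = u^{-1} + (1+u^{-1})(1+a)^2 .
\]
Thus each diagonal entry of $M_0(u)$ becomes a unit of $k$ after subtracting a multiple of $(1+a)^2$, while the off-diagonal entries are divisible by $x$, respectively $y$, and $1 + (1+u)xy \equiv 1 \pmod x$.

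The reduction then runs as follows. Adding $u^{-1}x$ times the first row and $(1+a)$ times the second row to the (currently zero) third row turns the third row into $(x,\,1+a,\,1)$; one checks this using the identities above. Subtracting $(1+u)y$ times this new third row from the first row turns the first row into $(1,\,0,\,(1+u)y)$, and the leading $1$ can then be used to clear the rest of the first column and the entry in position $(1,3)$. At this stage the matrix has the form $1 \oplus B$ with $B = \left(\begin{smallmatrix} 1 + (1+u^{-1})xy & \ast \\ 1+a & 1 + (1+u)xy\end{smallmatrix}\right)$ and $\det B = 1$. Inside $B$, subtracting $(1+u^{-1})(1+a)$ times the row containing $1+a$ from the other row replaces $1 + (1+u^{-1})xy$ by the unit $u^{-1}$ (again by the displayed rewriting); clearing the column below this unit replaces $1 + (1+u)xy$ by $u$, and clearing the last off-diagonal entry yields $\operatorname{diag}(1, u^{-1}, u)$. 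By the Whitehead lemma $\operatorname{diag}(u^{-1},u) \in \mathrm{E}_2(R)$, hence $\operatorname{diag}(1,u^{-1},u) \in \mathrm{E}_3(R)$, and the lemma follows.

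The computations are routine once the two displayed formulas are in hand; the only real point, and the reason for passing from $\mathrm{SL}_2$ to $\mathrm{SL}_3$, is that the factor $1+a$ appearing in the off-diagonal entries is not a unit of $R$, so it cannot be cleared within $2\times 2$ matrices — one needs the extra row and column in order to use it to manufacture the unit $u^{-1}$ in a diagonal slot. In Mennicke-symbol language this is exactly the statement that $\left[\begin{smallmatrix} 1+(1+u)xy \\ (1+u^{-1})(1+a)x \end{smallmatrix}\right] = 1$, the symbol being trivial because $(1+u^{-1})$ is a unit of $k$ and $1 + (1+u)xy$ is a unit modulo each of $x$ and $1+a$; the explicit reduction above is just this computation carried out so as to keep track of the single stabilisation. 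Note finally that this lemma does not use that $u$ is a non-square — that hypothesis is needed only later, to show that the class of $M_0(u)$ in $K_1 \mathrm{Sp}(R)$ is non-zero.
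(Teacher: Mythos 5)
Your proof is correct and rests on the same core computation as the paper: both establish triviality of the Mennicke symbol attached to $M_0(u)$ by combining the defining relation $a^2 = xy$ with the characteristic-$2$ identity $1 + (1+u)a^2 + (1+u)(1+a)^2 = u$. The paper phrases this as symbol manipulation via the rules of \cite[Proposition~VI.3.4]{LAM} (factoring $[1+(1+u)a^2,(1+u)(1+a)y]$ through $[\cdot,1+u]$, $[\cdot,1+a]$, $[\cdot,y]$), while you unwind the very same algebra into an explicit sequence of elementary row and column operations ending with the Whitehead lemma, so this is a more hands-on presentation of the same argument rather than a genuinely different route.
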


\begin{proof}
 We use the computation rules for Mennicke symbols proved in \cite[Proposition~VI.3.4]{LAM}. It suffices to show that the Mennicke symbol
 \[
  [1+(1+u)a^2,(1+u)(1+a)y]
 \]
 is trivial. Clearly $[1+(1+u)a^2,(1+u)]$ is trivial and $[1+(1+u)a^2,y]$ is trivial since $a^2=xy$ in $R$. Since Mennicke symbols are multiplicative, it only remains to show that $[1+(1+u)a^2, 1+a]$ is trivial. We have
 \begin{align*}
 [1+(1+u)a^2,1+a] &=[1=(1+u)a^2+(1+u)(1+a^2),1+a] \\
 &=[1+1+u, 1+a]\\
 &=[u,1+a]=1 \smash{\rlap{,}}
 \end{align*}
 so the matrix in question is indeed elementary.
\end{proof}

 Showing that $M_0(u)$ represents a nontrivial element of $K_1 \mathrm{Sp}(R)$ is more invovled. In \cite{GUBELADZE}, Gubeladze has shown that a certain matrix is nontrivial in $K_1(R)$. We adapt his proof to the case of symplectic $K$-theory $K_1 \mathrm{Sp}(R)$. For this, it is convenient to consider the matrix
 \[
 M(u)=\begin{pmatrix}
 1+ (1+u)a^2 & (1+u)(1+a)y \\ (1+u^{-1})(1+a)x & 1+(1+u^{-1})a^2
 \end{pmatrix}
 \begin{pmatrix}
 1 & 0 \\ (1+u^{-1})x & 1
 \end{pmatrix}
 \]
 instead, which represents the same element as $M_0(u)$ in $K_1\mathrm{Sp}(R)$.
 
 We consider the homomorphism $\psi \colon R \rightarrow k[s,t]$ to the polynomial ring in two variables given by $\psi(x)=t$, $\psi(a)=st$, and $\psi(y)=s^2t$. (This identifies $R$ with the subring $k[t,st,s^2t]$ of $k[s,t]$.) Under this homomorphism, we have
 \[
 \psi \bigl(M_0(u)\bigr)=\begin{pmatrix}
 1+ (1+u)s^2t^2 & (1+u)(1+a)s^2t \\ (1+u^{-1})(1+a)t & 1+(1+u^{-1})s^2t^2
 \end{pmatrix} \smash{\rlap{,}}
 \]
 which happens to be an elementary matrix, we have
 \[
 \psi\bigl(M(u)\bigr)=
 \begin{pmatrix}
 1 & us \\ 0 & 1
 \end{pmatrix}
 \begin{pmatrix}
 1 & 0 \\ u^{-1}t & 1
 \end{pmatrix}
 \begin{pmatrix}
 1 & (1+u)s \\ 0 & 1
 \end{pmatrix}
 \begin{pmatrix}
 1 & 0 \\ t & 1
 \end{pmatrix}
 \begin{pmatrix}
 1 & s \\ 0 & 1
 \end{pmatrix}
 \begin{pmatrix}
 1 & 0 \\ (1+u^{-1})t & 1
 \end{pmatrix}
 \]
 in $\mathrm{SL}_2(k[s,t])$.

 Gubeladze's proof uses Steinberg groups, so we will similarly work in the symplectic Steinberg group $\mathrm{StSp}(A)$ of a commutative ring $A$. This is the directed colimit of the Steinberg groups $\mathrm{StSp}_{2r}$ of the symplectic group $\mathrm{Sp}_{2r}$. Here $\mathrm{Sp}_{2r}(A)$ is the group of $2r \times 2r$ matrices $M$ which leave the alternating form
 \[
 \chi_{2r}= \begin{pmatrix}
 0 & 1 \\ -1 & 0 \\ && \ddots \\ &&& 0 & 1\\ &&& -1 & 0
 \end{pmatrix}
 \]
 invariant, that is, $M^t \chi_{2r} M=\chi_{2r}$. The symplectic Steinberg group $\mathrm{StSp}_{2r}(A)$ has generators
 \[
 x_{\beta}(\xi) \quad \xi \in A, \; \beta \in \Phi_r \smash{\rlap{,}}
 \]
 where $\Phi_r$ denotes the root system of $\mathrm{Sp}_{2r}$. These generators are subject to two relations: for all $\xi, \xi^{\prime} \in A$ and fixed $\beta \in \Phi_r$ the equation
 \[
 x_{\beta}(\xi + \xi^{\prime})=x_{\beta}(\xi)x_{\beta}(\xi^{\prime})
 \]
 holds, and a second relation involving commutators for distinct $\beta, \beta^{\prime} \in \Phi_r$ (see for example \cite[\S 3]{STEIN_COVERING} for a precise definition of the Steinberg group of a Chevalley--Demazure group scheme). We will only need the first relation in our computations.
 
 Each root $\beta \in \Phi_r$ has an associated \emph{root unipotent} $\xi \mapsto e_{\beta}(\xi) \in \mathrm{Sp}_{2r}$. Sending $x_{\beta}(\xi)$ to $e_{\beta}(\xi)$ defines a surjective group homomorphism $\mathrm{StSp}_{2r}(A) \rightarrow \mathrm{Ep}_{2r}(A)$. We orient the root system so that the unique simple long root $\alpha$ of $\mathrm{Sp}_{2r}$ has root unipotent $\xi \mapsto e_{\alpha}(\xi)$ given by $e_{12}(\xi)$. That this is possible follows for example from the explicit description of the root unipotents given in \cite{BAK_STEPANOV}, with one caveat. We have to transform the basis of $A^{2r}$ used in \cite{BAK_STEPANOV} indexed by the ordered set $I=(1,2, \ldots, r, -r, \ldots, -1)$ to the one indexed by $\bigl(r,-r,(r-1),-(r-1),\ldots, 1,-1 \bigr)$. This has the effect of transforming the alternating form $\bigl( \begin{smallmatrix} 0 & J \\ -J & 0 \end{smallmatrix} \bigr)$ of \cite{BAK_STEPANOV} into $\chi_{2r}$ and sending the root unipotent $e_{2 \varepsilon_r}(\xi)$ to $e_{12}(\xi)$.
 
 The upshot of this is that the element
 \[
 z \defl x_{\alpha}(us)x_{-\alpha}(u^{-1}t)x_{\alpha}\bigl((1+u)s\bigr)x_{-\alpha}(t) x_{\alpha}(s)x_{-\alpha} \bigl((1+u^{-1})t\bigr) \in \mathrm{StSp}(k[s,t])
 \]
 is sent to $\psi\bigl(M(u)\bigr)$ under the canonical map $\mathrm{StSp}(k[s,t]) \rightarrow \mathrm{Ep}(k[s,t])$, where $\alpha$ denotes the unique simple long root.

 The following proposition is an adaptation of Step~1 of the proof of \cite[Proposition~5.1]{GUBELADZE} to the case of symplectic Steinberg groups and the matrix $M(u)$. Recall that the kernel of the natural surjection $\mathrm{StSp}(A) \rightarrow \mathrm{Ep}(A)$ is the second symplectic $K$-group $K_2 \mathrm{Sp}(A)$.
 
 \begin{prop} \label{prop:gubeladze} 
 Assume the following two facts:
\begin{enumerate}
\item[(1)] (Homotopy invariance) The inclusion $k \rightarrow k[s,t]$ induces an isomorphism $K_2 \mathrm{Sp}(k) \rightarrow K_2 \mathrm{Sp}(k[s,t])$;
\item[(2)] (Nontriviality in the quotient) The image $\bar{z} \in \mathrm{StSp}\bigl(k[s,t]\slash (s^2,st,t^2)\bigr)$ of $z$ under the canonical projection is nontrivial if $u \in k \setminus k^2$.
\end{enumerate}

Then  $[M(u)] \in K_1 \mathrm{Sp}(R)$ is nontrivial whenever $u$ is not a square. 
 \end{prop}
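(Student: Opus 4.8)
The plan is to argue by contradiction. Fix $u \in k \setminus k^2$ and assume $[M(u)] = 0$ in $K_1 \mathrm{Sp}(R)$; then $M(u)$ lies in $\mathrm{Ep}(R)$, so it has a preimage $w \in \mathrm{StSp}(R)$ under the canonical surjection $\mathrm{StSp}(R) \rightarrow \mathrm{Ep}(R)$. I will transport $w$ around a commuting square of rings and, using (1) and (2), deduce that $\bar z$ is trivial, which is the required contradiction. Besides the given $\psi \colon R \rightarrow k[s,t]$ and the projection $p \colon k[s,t] \rightarrow k[s,t]\slash(s^2,st,t^2)$, the maps involved are: the augmentation $\epsilon \colon R \rightarrow k$ killing $a,x,y$; the quotient $q_0 \colon R \rightarrow k[x]$ with $a,y \mapsto 0$ and $x \mapsto x$, the further quotient $c \colon k[x] \rightarrow k[x]\slash(x^2)$, and $q \defl c\,q_0$; and the inclusion $\iota \colon k[x]\slash(x^2) \hookrightarrow k[s,t]\slash(s^2,st,t^2)$ with $x \mapsto t$. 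Because $\psi(a) = st$ and $\psi(y) = s^2 t$ both vanish modulo $(s^2,st,t^2)$ while $\psi(x) = t$, the relevant square commutes: $p\,\psi = \iota\,q$. Finally, since $k \rightarrow k[s,t]$ factors as $k \rightarrow k[x] \rightarrow k[s,t]$ with a ring retraction at each stage, hypothesis (1) formally implies that $K_2 \mathrm{Sp}(k) \rightarrow K_2 \mathrm{Sp}(k[x])$ is an isomorphism as well.

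The first step compares lifts over $k[s,t]$. Both $\psi_*(w)$ and the explicit element $z$ map to $\psi\bigl(M(u)\bigr)$ in $\mathrm{Ep}(k[s,t])$, so $\eta \defl \psi_*(w)\,z^{-1}$ lies in $K_2 \mathrm{Sp}(k[s,t])$. By (1) the structural inclusion $j \colon k \rightarrow k[s,t]$ induces an isomorphism on $K_2 \mathrm{Sp}$, with inverse induced by the retraction $\pi \colon k[s,t] \rightarrow k$, $s,t \mapsto 0$; hence $\eta = j_*(\zeta)$ with $\zeta \defl \pi_*(\eta)$. Since $\pi$ kills every letter occurring in $z$ we have $\pi_*(z) = 1$, and $\pi\,\psi = \epsilon$, so $\zeta = \epsilon_*(w)$; this is consistent because $\epsilon\bigl(M(u)\bigr) = I$ (both $M_0(u)$ and the right-hand unipotent factor specialize to the identity under $\epsilon$), so $\zeta \in K_2 \mathrm{Sp}(k)$. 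Applying $p_*$ to the relation $\psi_*(w) = j_*(\zeta)\,z$ and using $p\,\psi = \iota\,q$ on the left yields
\[
 \iota_*\bigl(q_*(w)\bigr) = (p\,j)_*(\zeta) \cdot \bar z
\]
in $\mathrm{StSp}\bigl(k[s,t]\slash(s^2,st,t^2)\bigr)$, where $p\,j \colon k \rightarrow k[s,t]\slash(s^2,st,t^2)$ is the structural inclusion.

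The second step evaluates $q_*(w)$, and here the right-hand unipotent factor of $M(u)$ pays off. Under $q_0$ the matrix $M_0(u)$ becomes $\bigl( \begin{smallmatrix} 1 & 0 \\ (1+u^{-1})x & 1 \end{smallmatrix} \bigr)$, and since $k$ has characteristic $2$ the square of this matrix is the identity; hence $q_0\bigl(M(u)\bigr) = I$, so $(q_0)_*(w)$ actually lies in $K_2 \mathrm{Sp}(k[x])$. As $k[x]$ is a polynomial ring, the isomorphism $K_2 \mathrm{Sp}(k) \cong K_2 \mathrm{Sp}(k[x])$ noted above, together with the augmentation $k[x] \rightarrow k$, $x \mapsto 0$, forces $(q_0)_*(w)$ to be the image of $\zeta$ under the structural inclusion $k \rightarrow k[x]$ — the same $\zeta$, since composing back down to $k$ recovers $\epsilon_*(w)$. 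Applying $c_*$ gives $q_*(w) = j'_*(\zeta)$, where $j' \colon k \rightarrow k[x]\slash(x^2)$ is the structural inclusion, and therefore $\iota_*\bigl(q_*(w)\bigr) = (\iota\,j')_*(\zeta) = (p\,j)_*(\zeta)$. Substituting into the displayed identity gives $(p\,j)_*(\zeta) = (p\,j)_*(\zeta) \cdot \bar z$, i.e.\ $\bar z = 1$, contradicting (2). Hence $[M(u)] \ne 0$ whenever $u$ is not a square.

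I expect the only genuinely delicate point to be that the specializations $\epsilon\bigl(M(u)\bigr)$ and $q_0\bigl(M(u)\bigr)$ must equal the identity matrix on the nose, not merely be elementary — for only then do $\psi_*(w)\,z^{-1}$ and $(q_0)_*(w)$ land in $K_2 \mathrm{Sp}$, so that the homotopy invariance statement (1) and its one-variable consequence can be applied. This is exactly what the characteristic-$2$ hypothesis and the extra right factor distinguishing $M(u)$ from $M_0(u)$ are there to ensure; everything else is a diagram chase through the functoriality of $\mathrm{StSp}$ and $K_2 \mathrm{Sp}$ together with the compatibility $p\,\psi = \iota\,q$.
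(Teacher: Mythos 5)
Your proof is correct and follows the same overall skeleton as the paper's (lift $M(u)$ to $w$, compare $\psi_*w$ with $z$ over $k[s,t]$, use hypothesis~(1) to shift the discrepancy into $K_2\mathrm{Sp}(k)$, then exploit the factorization of $R \to k[s,t] \to k[s,t]/(s^2,st,t^2)$ through $k[x]$), but the final step is genuinely different. The paper modifies $w$ to $w'$ so that $\psi_*w' = z$ on the nose, lets $w'' \in \mathrm{StSp}(k[x])$ be its image, and then exploits the ring endomorphism $\zeta \colon k[s,t]/(s^2,st,t^2) \to k[s,t]/(s^2,st,t^2)$, $s \mapsto 0$, $t \mapsto t$: the identity $\zeta\varphi = \varphi$ forces $\zeta_* \bar z = \bar z$, and a direct generator computation (using characteristic~$2$) shows $\zeta_*\bar z = x_{-\alpha}\bigl((u^{-1} + 1 + 1 + u^{-1})\bar t\bigr) = 1$. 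Your version avoids the generator computation and the element $w'$ entirely: you observe that $q_0\bigl(M(u)\bigr) = I$ (which uses both characteristic $2$ and the extra right-hand unipotent factor distinguishing $M(u)$ from $M_0(u)$), place $(q_0)_*(w)$ in $K_2\mathrm{Sp}(k[x])$, and then invoke a second instance of homotopy invariance — namely, that $K_2\mathrm{Sp}(k) \to K_2\mathrm{Sp}(k[x])$ is an isomorphism, which you correctly extract from hypothesis~(1) by a two-out-of-three argument with the retractions. This pins $(q_0)_*(w)$ down as $a_*(\zeta)$, and the commutativity $p\psi = \iota q$ then kills $\bar z$. Each route isolates the role of the extra unipotent factor in a different place: the paper needs it so that $\zeta_*\bar z = 1$; you need it so that $q_0\bigl(M(u)\bigr) = I$. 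Your version has the small advantage of working entirely within $K_2\mathrm{Sp}$ and avoiding an explicit Steinberg-generator manipulation, at the cost of an extra (but formal) application of homotopy invariance over $k[x]$.
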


\begin{proof}
 Assume that $u$ is not a square but $[M(u)]$ is trivial, that is, $M(u) \in \mathrm{Ep}(R)$. Then there exists an element $w \in \mathrm{StSp}(R)$
 whose image under the surjective homomorphism
 \[
 \mathrm{StSp}(R) \rightarrow \mathrm{Ep}(R)
 \]
 is $M(u)$.
 
 It follows that $\psi_{\ast} w \in \mathrm{StSp}(k[s,t])$ has image $\psi\bigl(M(u)\bigr)$ in $\mathrm{Ep}(k[s,t])$. Since $z$ has the same image in $\mathrm{Ep}(k[s,t])$, the product $v \defl \psi_{\ast} w \cdot z^{-1}$ lies in the kernel $K_2 \mathrm{Sp}(k[s,t])$ of the map
 \[
 \mathrm{StSp}(k[s,t]) \rightarrow \mathrm{Ep}(k[s,t]) \smash{\rlap{.}}
 \]
 From Assumption~(1) we conclude that $v$ lies in the image of the isomorphism $K_2 \mathrm{Sp}(k) \rightarrow K_2 \mathrm{Sp}(k[s,t])$. More precisely, if we write $\iota \colon k \rightarrow k[s,t]$ for the canonical inclusion, then there exists a (unique) $v_0 \in K_2 \mathrm{Sp}(k)$ such that $v=\iota_{\ast} v_0$. But $\iota \colon k \rightarrow k[s,t]$ factors through $\psi \colon R \rightarrow k[s,t]$. Indeed, if we write $\iota^{\prime} \colon k \rightarrow R$ for the canonical inclusion, we find that $\iota=\psi \iota^{\prime}$. Thus we can modify $w$ to $w^{\prime}=(\iota^{\prime}_{\ast} v_0)^{-1}w$ to obtain an element $w^{\prime} \in \mathrm{StSp}(R)$ with
 \[
 (\psi_{\ast} w^{\prime})z^{-1}=(\psi_{\ast} \iota^{\prime}_{\ast} v_0)^{-1}\psi_{\ast} w z^{-1}=v^{-1}v=1 \smash{\rlap{,}}
 \]
 that is, $\psi_{\ast} w^{\prime}=z$.
 
 Now let $\varphi \colon k[x] \rightarrow k[s,t] \slash (s^2,st,t^2)$ be the unique $k$-homomorphism satisfying $\varphi(x)=t$. Since the projection $\pi \colon k[s,t] \rightarrow k[s,t] \slash (s^2,st,t^2)$ sends $st$ to $0$, it follows that the composite
 \[
 \xymatrix{R \ar[r]^-{\psi} & k[s,t] \ar[r]^-{\pi} & k[s,t]\slash (s^2,st,t^2)}
 \]
 factors through $\varphi$. The image $w^{\prime\prime}$ of $w^{\prime}$ in $\mathrm{StSp}(k[x])$ thus gives an element with $\varphi_{\ast} w^{\prime \prime}=\pi_{\ast}z=\bar{z}$.
 
 Finally, let $\zeta \colon k[s,t] \slash (s^2,st,t^2) \rightarrow k[s,t] \slash (s^2,st,t^2)$ be the endomorphism given by $\zeta(s)=0$, $\zeta(t)=t$. Then $\zeta \varphi=\varphi$, so from the commutative triangle
 \[
 \xymatrix{& \mathrm{StSp}(k[x]) \ar[rd]^{\varphi_{\ast}} \ar[ld]_{\varphi_{\ast}} \\
 \mathrm{StSp}\bigl(k[s,t]\slash(s^2,st,t^2)\bigr) \ar[rr]_{\zeta_{\ast}} && \mathrm{StSp}\bigl(k[s,t]\slash(s^2,st,t^2)\bigr) }
 \]
 we conclude that $\zeta_{\ast}(\pi_{\ast} z)=\zeta_{\ast} \varphi_{\ast}w^{\prime\prime}=\varphi_{\ast} w^{\prime \prime}=\pi_{\ast} z$, that is, $\zeta_{\ast} \bar{z}=\bar{z}$.
 
 Since we have
\begin{align*}
\zeta_{\ast} \bar{z} &= x_{-\alpha}(u^{-1} \bar{t}) x_{-\alpha}(\bar{t}) x_{-\alpha}\bigl((1+u^{-1})\bar{t}\bigr)\\
&=x_{-\alpha}\bigl(u^{-1}\bar{t}+\bar{t}+(1+u^{-1})\bar{t}\bigr)\\
&=1 \smash{\rlap{,}}
\end{align*} 
 this contradicts Assumption~(2) that $\bar{z}$ is nontrivial.
\end{proof}

\section{Homotopy invariance}\label{section:homotopy_invariance}

 The following argument was supplied by Markus Land. We use the notation of \cite{CALMES_ET_AL_III} and we write $\mathrm{GW}^{-ge}(R)$ for $\mathrm{GW}^{ge}(R;-R)$ and similarly for other flavours of Grothendieck--Witt and $L$-theory. For the definition of the genuine even Poincar{\'e} structure, see \cite[Proposition~4.2.22]{CALMES_ET_AL_I} and \cite[Notation~4.2.23]{CALMES_ET_AL_I}.
  
 \begin{prop}[Land]\label{prop:land}
 For any field $k$, the canonical morphism
 \[
 \mathrm{GW}_2^{-ge}(k) \rightarrow \mathrm{GW}_2^{-ge}(k[s,t])
 \]
 is an isomorphism.
 \end{prop}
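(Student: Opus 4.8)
The plan is to reduce the statement, via the fundamental fibre sequence
\[
\mathrm{K}(R)_{hC_2} \xrightarrow{\mathrm{hyp}} \mathrm{GW}^{-ge}(R) \longrightarrow \mathrm{L}^{-ge}(R)
\]
of \cite{CALMES_ET_AL_II} --- which is natural in $R$, has the $C_2$-action on $\mathrm{K}(R)$ induced by the duality $P \mapsto \mathrm{Hom}_R(P,R)$ underlying the genuine even Poincar\'e structure, and has $\mathrm{L}^{-ge}$ the associated periodic $L$-theory --- to the corresponding homotopy invariance statements for $K$-theory and for $L$-theory. Applying this fibre sequence to $R = k$ and to $R = k[s,t]$ yields a map of long exact sequences of homotopy groups, so by the five lemma it suffices to show that the inclusion $\iota \colon k \to k[s,t]$ induces equivalences on $\mathrm{K}(-)_{hC_2}$ and on $\mathrm{L}^{-ge}(-)$; in fact isomorphisms on $\pi_\ast$ for $\ast \le 3$ would already suffice to conclude in degree $2$.

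For the $K$-theory term this is classical. Both $k$ and $k[s,t]$ are regular Noetherian, so $\iota$ induces an equivalence $\mathrm{K}(k) \xrightarrow{\sim} \mathrm{K}(k[s,t])$ by Quillen's homotopy invariance theorem, and this equivalence is $C_2$-equivariant because base change along $\iota$ commutes with the duality functors. Passing to homotopy orbits, $\mathrm{K}(k)_{hC_2} \to \mathrm{K}(k[s,t])_{hC_2}$ is an equivalence.

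The remaining and, I expect, hardest point is the homotopy invariance of $L$-theory: that $\iota$ induces an equivalence $\mathrm{L}^{-ge}(k) \to \mathrm{L}^{-ge}(k[s,t])$. The strategy I would pursue is to compare the genuine even structure with the classical symmetric (and quadratic) ones along the chain
\[
\mathrm{L}^{-gq}(A) \longrightarrow \mathrm{L}^{-ge}(A) \longrightarrow \mathrm{L}^{-gs}(A) \longrightarrow \mathrm{L}^{-s}(A),
\]
where $\mathrm{L}^{-s}$ denotes the classical (non-genuine) skew-symmetric $L$-theory. On the one hand, classical $L$-theory is homotopy invariant, $\mathrm{L}^{-s}(A) \simeq \mathrm{L}^{-s}(A[x])$, by a theorem of Ranicki. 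On the other hand, the description of the genuine Poincar\'e structures in \cite{CALMES_ET_AL_I, CALMES_ET_AL_II} identifies the fibres of the maps in this chain, up to shift and truncation, with pieces assembled from the Tate construction $\mathrm{K}(A)^{tC_2}$ and from classical $L$-theory, all of which are homotopy invariant for regular $A$ because $\mathrm{K}(A)$ is. Threading these facts together --- applying two-out-of-three in the relevant fibre sequences over $A = k$ and $A = k[s,t]$ --- produces the desired equivalence for $\mathrm{L}^{-ge}$, and hence, through the fibre sequence of the first paragraph, the proposition. An alternative route for the last step would be to invoke a Bass--Heller--Swan decomposition $\mathrm{GW}^{-ge}(A[t]) \simeq \mathrm{GW}^{-ge}(A) \times N\mathrm{GW}^{-ge}(A)$ as in \cite{CALMES_HARPAZ_NARDIN} and reduce, applying it twice, to the vanishing of the Nil-term $N\mathrm{GW}^{-ge}$ over a regular ring.
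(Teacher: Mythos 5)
Your first two steps match the paper exactly: the fundamental fibre sequence of \cite{CALMES_ET_AL_II}, the five lemma, and the observation that $K(k)_{hC_2} \to K(k[s,t])_{hC_2}$ is an equivalence because $K$-theory of regular rings is $\mathbb{A}^1$-invariant and base change along $\iota$ intertwines the dualities. The reduction to $L$-theory is also correct; in fact only the maps $L_n^{-ge}(k) \to L_n^{-ge}(k[s,t])$ for $n = 2,3$ are needed, which by the shift $L^{gs}(R;R) \simeq \Omega^2 L^{ge}(R;-R)$ become $L_n^{gs}(k) \to L_n^{gs}(k[s,t])$ for $n = 0,1$.

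The $L$-theory step is where your proposal has a real gap. You aim to show that $\iota$ induces a full \emph{equivalence} $L^{-ge}(k) \simeq L^{-ge}(k[s,t])$ by identifying the fibres in the chain $L^{gq} \to L^{ge} \to L^{gs} \to L^s$ with Tate pieces and running two-out-of-three; but the fibre of $L^{gs}(A) \to L^s(A)$ is precisely the thing one does \emph{not} control this way, and there is no reason to expect it to be homotopy invariant. What is actually available (and what the paper uses) is a range-of-agreement statement, \cite[Corollary~1.3.10]{CALMES_ET_AL_III}: for $A$ regular of global dimension $d$, the map $L_n^{gs}(A) \to L_n^s(A)$ is an isomorphism in a range and injective one step lower. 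With $d=2$ for $k[s,t]$ this gives an isomorphism at $n=1$ and an injection at $n=0$ --- just enough. Then one does not conclude homotopy invariance of $L^{gs}$ directly; instead one uses that the evaluation $L_n^{gs}(k[s,t]) \to L_n^{gs}(k)$ is split epi (split by $\iota_\ast$), chases the commutative square against the homotopy invariance of classical $L^s$ from \cite{CALMES_HARPAZ_NARDIN} to see this epi is also injective for $n = 0,1$, and then deduces that its section $\iota_\ast$ is an isomorphism in exactly the degrees needed. Your alternative suggestion of a Bass--Heller--Swan decomposition for $\mathrm{GW}^{-ge}$ with vanishing Nil-term would, if available in this generality, require essentially the same bounded-global-dimension input, so it is not an independent shortcut.
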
 

 \begin{proof}
 Let $A=k[s,t]$. From the fundamental fiber sequence \cite[Corollary~4.4.14]{CALMES_ET_AL_II}, we get the diagram
 \[
 \xymatrix@C=19pt{L_3^{-ge}(k) \ar[r] \ar[d] & \pi_2\bigl(K(k)_{hC_2}\bigr) \ar[r] \ar[d] & \mathrm{GW}_2^{-ge}(k) \ar[r] \ar[d] & L_2^{-ge}(k) \ar[r] \ar[d] & \pi_1\bigl(K(k)_{hC_2}\bigr) \ar[d] \\
 L_3^{-ge}(A) \ar[r] & \pi_2\bigl(K(A)_{hC_2}\bigr) \ar[r] & \mathrm{GW}_2^{-ge}(A) \ar[r]  & L_2^{-ge}(A) \ar[r] & \pi_1\bigl(K(A)_{hC_2}\bigr)}
 \]
 with exact rows. The second and fourth vertical homomorphism are isomorphisms since $K$-theory is $\mathbb{A}^{1}$-invariant for regular rings (such as fields). From the five lemma it follows that it suffices to check that
 \[
 L^{-ge}_n(k) \rightarrow L^{-ge}_n(k[s,t])
 \]
 is an isomorphism for $n=2,3$. Since $L^{gs}(R;R) \simeq \Omega^2 L^{ge}(R;-R)$, we have isomorphisms $L_n^{-ge}(R) \cong L^{gs}_{n-2}(R)$ with genuine symmetric $L$-groups for all rings $R$ (see \cite[Corollary~R.10]{CALMES_ET_AL_III}).
 
 Since $k[s,t]$ is regular of global dimension $2$, the top horizontal map in the square
 \[
 \xymatrix{L_n^{gs}(k[s,t]) \ar[r]  \ar[d] & L^s(k[s,t]) \ar[d] \\ L_n^{gs}(k) \ar[r] & L_n^s(k)}
 \]
 is an isomorphism if $n=1$ and injective if $n=0$ (see \cite[Corollary~1.3.10]{CALMES_ET_AL_III}). From \cite{CALMES_HARPAZ_NARDIN} we know that the right vertical homomorphism above is an isomorphism. It follows that the split epimorphism $L_n^{gs}(k[s,t]) \rightarrow L_n^{gs}(k)$ is injective for $n=0,1$, hence an isomorphism. Thus its section $L_n^{gs}(k) \rightarrow L_n^{gs}(k[s,t])$ is an isomorphism, too, which concludes the proof.
 \end{proof}

\begin{cor}\label{cor:homotopy_invariance}
For any field $k$, the homomorphism
\[
K_2 \mathrm{Sp}(k) \rightarrow K_2 \mathrm{Sp}(k[s,t])
\]
induced by the natural inclusion is an isomorphism.
\end{cor}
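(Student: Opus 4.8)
The plan is to reduce the statement to Proposition~\ref{prop:land} by identifying classical symplectic $K$-theory with the Grothendieck--Witt theory of the genuine even Poincar\'e structure twisted by the sign. Concretely, I would use that for every commutative ring $A$ there is an isomorphism $K_i \mathrm{Sp}(A) \cong \mathrm{GW}_i^{-ge}(A)$, natural in $A$, for all $i \geq 1$. Granting this, Proposition~\ref{prop:land} gives that the inclusion $k \to k[s,t]$ induces an isomorphism $\mathrm{GW}_2^{-ge}(k) \to \mathrm{GW}_2^{-ge}(k[s,t])$, and transporting along the natural isomorphism above shows that the map $K_2 \mathrm{Sp}(k) \to K_2 \mathrm{Sp}(k[s,t])$ is an isomorphism, as claimed.

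To justify the identification, recall from \cite{CALMES_ET_AL_I} that the Poincar\'e objects of the genuine even Poincar\'e structure on perfect $A$-complexes with coefficients in $-A$ which come from the heart are precisely the finitely generated projective $A$-modules equipped with a nondegenerate alternating form: twisting by $-A$ turns a symmetric form into a skew-symmetric one, and the ``even'' refinement is exactly the condition that the form be alternating (i.e.\ $b(v,v)=0$) rather than merely skew, which is the relevant condition when $2$ is not invertible. Hence the base-point component of the Grothendieck--Witt space $\mathrm{GW}^{-ge}(A)$ is, by the additivity and group-completion theorems of \cite{CALMES_ET_AL_II, HEBESTREIT_STEIMLE}, equivalent to $B\mathrm{Sp}(A)^+$, where $\mathrm{Sp}(A)=\colim_r \mathrm{Sp}_{2r}(A)$; thus $\mathrm{GW}_i^{-ge}(A) \cong \pi_i B\mathrm{Sp}(A)^+ = K_i \mathrm{Sp}(A)$ for $i \geq 1$, and for $i=2$ the right-hand side coincides with the kernel of $\mathrm{StSp}(A) \to \mathrm{Ep}(A)$ used in \S\ref{section:matrix}. (The comparison between these genuine Grothendieck--Witt groups and the classical hermitian $K$-groups is also recorded in \cite{CALMES_ET_AL_III}.)

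The main obstacle is exactly this first step: assembling from the literature the precise comparison between the group-theoretic symplectic $K$-theory used elsewhere in the paper and the $\infty$-categorical Grothendieck--Witt theory of \cite{CALMES_ET_AL_I, CALMES_ET_AL_II, CALMES_ET_AL_III} in a form valid over rings in which $2$ need not be invertible, and checking that it is natural in the ring so that it intertwines the two maps induced by $k \to k[s,t]$. In particular one must be careful that it is the genuine \emph{even} structure --- and not the genuine symmetric or genuine quadratic one --- whose degree-$0$ Poincar\'e objects are symplectic spaces after the sign twist. Once these citations are in place, the corollary is an immediate consequence of Proposition~\ref{prop:land}.
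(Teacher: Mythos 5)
Your proposal follows essentially the same route as the paper: identify $\mathrm{GW}^{-ge}(R)$ with the group completion of the groupoid of finitely generated projective modules with nondegenerate alternating forms (the paper cites \cite[Corollary~8.1.8]{HEBESTREIT_STEIMLE} and \cite[Propositions~8.20, 8.22]{SCHAEPPI_SYMPLECTIC} for this and for the comparison of $\pi_2$ with the Steinberg-group description of $K_2\mathrm{Sp}$), and then invoke Proposition~\ref{prop:land}. The only cosmetic difference is that you phrase the intermediate step via $B\mathrm{Sp}(R)^+$, whereas the paper defers the homotopy-group comparison to its cited Proposition~8.22; the substance is identical.
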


\begin{proof}
 It follows from results of Hebestreit and Steimle that $\mathrm{GW}^{ge}(R;-R)$ is equivalent to the group completion of $\mathrm{Iso}\bigl(\mathbf{A}(R)\bigr)$, the groupoid of finitely generated projective $R$-modules equipped with an alternating form (see \cite[Corollary~8.1.8]{HEBESTREIT_STEIMLE} and the proof of \cite[Proposition~8.20]{SCHAEPPI_SYMPLECTIC} for details). The second homotopy group of this group completion is naturally isomorphic to $K_2 \mathrm{Sp}(R)$ (this follows from \cite[Proposition~8.22]{SCHAEPPI_SYMPLECTIC}, applied to the basic object given by the hyperbolic plane), so the claim follows from Proposition~\ref{prop:land}.
\end{proof}

\section{Nontriviality in the quotient}\label{section:nontriviality}

 Throughout this section we fix a field $k$ of characteristic $2$. We first express the element
 \[
 z = x_{\alpha}(us)x_{-\alpha}(u^{-1}t)x_{\alpha}\bigl((1+u)s\bigr)x_{-\alpha}(t) x_{\alpha}(s)x_{-\alpha} \bigl((1+u^{-1})t\bigr) \in \mathrm{StSp}(k[s,t])
 \]
 in terms of commutators.

\begin{lemma}\label{lemma:z_as_commutators}
 The equation
 \[
 z=[x_{\alpha}(us),x_{-\alpha}(u^{-1}t)][x_{-\alpha}(u^{-1}t),x_{\alpha}(s)][x_{\alpha}(s),x_{-\alpha}\bigl((1+u^{-1})t\bigr)]
 \]
 holds in $\mathrm{StSp}(k[s,t])$.
\end{lemma}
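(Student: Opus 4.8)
The plan is to verify the identity by a direct manipulation inside $\mathrm{StSp}(k[s,t])$, using only the additivity relation $x_\beta(\xi+\xi')=x_\beta(\xi)x_\beta(\xi')$ together with the hypothesis that $k$ has characteristic $2$; in particular the commutator relations between distinct roots will play no role.

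First I would record the one consequence of characteristic $2$ that drives the whole computation: for every root $\beta \in \Phi_r$ and every scalar $\xi$ we have $x_\beta(\xi)^2 = x_\beta(2\xi) = x_\beta(0) = 1$, so each generator $x_\beta(\xi)$ is an involution. Hence $x_\beta(\xi)^{-1}=x_\beta(\xi)$ and $[x_\beta(\xi),x_\gamma(\eta)] = x_\beta(\xi)\,x_\gamma(\eta)\,x_\beta(\xi)\,x_\gamma(\eta)$, independently of which sign convention one adopts for the commutator.

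Next I would expand the product of the three commutators on the right-hand side into a word of length twelve and cancel at its two junctions: between the first and second commutator the subword $x_{-\alpha}(u^{-1}t)\,x_{-\alpha}(u^{-1}t)=x_{-\alpha}(2u^{-1}t)=1$ is deleted, and between the second and third the subword $x_\alpha(s)\,x_\alpha(s)=x_\alpha(2s)=1$ is deleted. This leaves the length-eight word
\[
x_\alpha(us)\,x_{-\alpha}(u^{-1}t)\,x_\alpha(us)\,x_\alpha(s)\,x_{-\alpha}(u^{-1}t)\,x_{-\alpha}\bigl((1+u^{-1})t\bigr)\,x_\alpha(s)\,x_{-\alpha}\bigl((1+u^{-1})t\bigr).
\]
Finally I would collapse the two remaining pairs of adjacent generators that share a common root, again by additivity: $x_\alpha(us)\,x_\alpha(s)=x_\alpha\bigl((1+u)s\bigr)$, and $x_{-\alpha}(u^{-1}t)\,x_{-\alpha}\bigl((1+u^{-1})t\bigr)=x_{-\alpha}\bigl((2u^{-1}+1)t\bigr)=x_{-\alpha}(t)$. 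The word that results is exactly
\[
x_\alpha(us)\,x_{-\alpha}(u^{-1}t)\,x_\alpha\bigl((1+u)s\bigr)\,x_{-\alpha}(t)\,x_\alpha(s)\,x_{-\alpha}\bigl((1+u^{-1})t\bigr) = z .
\]

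There is no real obstacle here; the statement is an exercise in the Steinberg relations. The only thing to watch is the bookkeeping of the cancellations at the two junctions, and the observation that every identification used --- the involution property, the two vanishing squares, and the two sums of scalars --- ultimately rests on the identity $2=0$ in $k$.
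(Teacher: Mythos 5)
Your proof is correct and uses exactly the same two ingredients as the paper (the additivity relation $x_\beta(\xi+\xi')=x_\beta(\xi)x_\beta(\xi')$ and the fact that each generator is an involution since $\mathrm{char}\,k=2$), but you traverse the computation in the opposite direction. The paper begins with $z$ and peels off $g_3$, $g_2$, $g_1$ from the right by repeated application of the swap identity $gh=hg[g,h]$ for involutions, merging adjacent factors of the same root along the way; you instead start from the product $g_1g_2g_3$, expand each commutator as the length-four word $ghgh$, cancel at the two junctions, and merge the remaining like-rooted neighbours to recover $z$. These are the same twelve-letter-word manipulations read forwards versus backwards, so neither version buys anything the other lacks, though yours dispenses with the explicit swap identity.
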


\begin{proof}
 From the relation $x_{\beta}(a+b)=x_{\beta}(a) x_{\beta}(b)$ in the Steinberg group and the fact that $k$ has characteristic $2$ it follows that the generators $x_{\beta}(a)$ all have order $2$. Moreover, given two elements $g$ and $h$ of order $2$ in an arbitrary group we have $gh=hgghgh=hg[g,h]$. To keep the formulas legible, we use the abbreviations $g_3=[x_{\alpha}(s),x_{-\alpha}\bigl((1+u^{-1})t\bigr)]$, $g_2=[x_{-\alpha}(u^{-1}t),x_{\alpha}(s)]$, and $g_1=[x_{\alpha}(us),x_{-\alpha}(u^{-1}t)]$ for the three commutators in question, so we have
 \begin{align*}
 z&=x_{\alpha}(us) x_{-\alpha}(u^{-1}t)x_{\alpha}\bigl((1+u)s\bigr) \underbrace{x_{-\alpha}(t) x_{-\alpha}\bigl((1+u^{-1})t\bigr)}_{=x_{-\alpha}(u^{-1}t)}x_{\alpha}(s) g_3 \\
 &=x_{\alpha}(us) x_{-\alpha}(u^{-1}t)\underbrace{x_{\alpha} \bigl((1+u)s\bigr) x_{\alpha}(s)}_{=x_{\alpha}(us)} x_{-\alpha}(u^{-1}t)g_2 g_3\\
 &=x_{\alpha}(us) \underbrace{x_{-\alpha}(u^{-1}t) x_{-\alpha}(u^{-1}t)}_{=1} x_{\alpha}(us) g_1 g_2 g_3\\
 &=g_1 g_2 g_3 \smash{\rlap{,}}  
 \end{align*}
 as claimed.
\end{proof}

 The following result of Stein is helpful for expressing these commutators in the quotient ring $k[s,t] \slash (s^2, st, t^2)$ in terms of Steinberg symbols. We refer the reader to \cite[\S 3]{VdK} for the definition and basic properties of Steinberg symbols. We note that the definition of \cite[\S 1]{STEIN_STABILITY} differs from the one in \cite{VdK} by a sign: the Steinberg symbol $\{u,v\}_{\beta}$ as defined in \cite[\S 1]{STEIN_STABILITY} is equal to the Steinberg symbol $\{u,v\}_{-\beta}$ with the convention of \cite[\S 3]{VdK}.
 
 \begin{lemma}[Stein]\label{lemma:Stein_commutator_formula}
  If $p,q \in A$ are elements with $pq=0$ and $1+p,1+q \in R^{\times}$, then for all roots $\beta$ we have
  \[
  [x_{-\beta}(p),x_{\beta}(q)]=\{1+p,1+q\}_{\beta}
  \]
  in $\mathrm{StSp}(A)$. In particular, this commutator is a central element of $\mathrm{StSp}(A)$.
 \end{lemma}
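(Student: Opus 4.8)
The plan is to deduce this from the standard Steinberg relations together with the defining properties of Steinberg symbols recalled in \cite[\S 3]{VdK}. Recall that for a root $\beta$ and a unit $v \in A^{\times}$ one sets $w_{\beta}(v) = x_{\beta}(v)x_{-\beta}(-v^{-1})x_{\beta}(v)$ and $h_{\beta}(v) = w_{\beta}(v)w_{\beta}(-1)$, and the symbol $\{u,v\}_{\beta}$ is defined (up to the sign convention noted before the lemma) as $h_{\beta}(uv)h_{\beta}(u)^{-1}h_{\beta}(v)^{-1}$. The key point is that the hypothesis $pq = 0$ forces $x_{-\beta}(p)$ and $x_{\beta}(q)$ to ``almost commute'': in the symplectic (or any Chevalley--Demazure) Steinberg group the Chevalley commutator formula expresses $[x_{-\beta}(p), x_{\beta}(q)]$ as a product of $x_{\gamma}$'s for roots $\gamma$ that are positive integer combinations $i(-\beta) + j\beta$ with $i,j \ge 1$; the only such $\gamma$ in the root string is (a multiple of) $-\beta$ itself, and its $x$-argument is a polynomial each of whose terms contains the factor $pq = 0$. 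Hence $[x_{-\beta}(p), x_{\beta}(q)]$ already lies in the rank-one subgroup generated by $x_{\pm\beta}$, and one can compute it explicitly there.

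Concretely, first I would reduce to the rank-one case: by the orientation conventions fixed in Section~\ref{section:matrix} it suffices to treat one long root, and the computation takes place inside the subgroup of $\mathrm{StSp}(A)$ generated by $x_{\alpha}$ and $x_{-\alpha}$, which surjects onto (a central extension of) $\mathrm{SL}_2(A)$. There one has the identities $w_{\beta}(v) x_{\beta}(\xi) w_{\beta}(v)^{-1} = x_{-\beta}(-v^{-2}\xi)$ and $h_{\beta}(v) x_{\pm\beta}(\xi) h_{\beta}(v)^{-1} = x_{\pm\beta}(v^{\mp 2}\xi)$ (Steinberg's relations (R7), (R8) in \cite{VdK}). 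Using $1 + p, 1 + q \in A^{\times}$ and $pq = 0$, a direct manipulation of the defining words — rewriting $x_{-\beta}(p) x_{\beta}(q)$, inserting $x_{\pm\beta}(0) = 1$ where convenient, and collecting $w$'s and $h$'s — yields $[x_{-\beta}(p), x_{\beta}(q)] = h_{\beta}(1+p)\,h_{\beta}(1+q)\,h_{\beta}\bigl((1+p)(1+q)\bigr)^{-1}$; since $(1+p)(1+q) = 1 + p + q + pq = 1 + p + q$ this is exactly $\{1+p, 1+q\}_{\beta}^{-1}$ in van der Kallen's normalization, and matching the sign convention noted above (passing from $\beta$ to $-\beta$, which inverts the symbol) gives $\{1+p,1+q\}_{\beta}$ as stated. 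Centrality is then automatic: Steinberg symbols $\{u,v\}_{\beta}$ are central in $\mathrm{StSp}(A)$ by \cite[\S 3]{VdK}.

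The main obstacle I expect is bookkeeping: getting the rank-one word computation to close up exactly, with the correct $h_{\beta}$ arguments and no leftover $x_{\pm\beta}$ factors, is where the hypothesis $pq = 0$ must be used at precisely the right spot, and where the sign/orientation conventions between \cite{VdK} and \cite{STEIN_STABILITY} have to be tracked carefully. Since the lemma is attributed to Stein, in practice I would cite the relevant statement in \cite{STEIN_STABILITY} (or \cite{VdK}) rather than redo this computation in full, quoting it in the form adapted to the conventions of this paper; but the above is the route by which it is proved.
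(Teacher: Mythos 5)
Your bottom line — quote Stein's result and reconcile the sign conventions between \cite{STEIN_STABILITY} and \cite{VdK} — is exactly what the paper does: it invokes \cite[Corollary~2.9]{STEIN_STABILITY} (specifically the last line of its proof, which gives $[x_{-\beta}(p),x_{\beta}(q)]=\{1+p,1+q\}_{-\beta}$ in Stein's convention), applies the sign remark stated just before the lemma, and cites \cite[Proposition~1.3.(a)]{STEIN_STABILITY} for centrality. So the approach matches, though you stop short of pinning down the precise reference. One point in your preliminary sketch is incorrect, however: the Chevalley commutator formula does \emph{not} apply to the pair $(-\beta,\beta)$, since it is only stated for roots $\gamma,\delta$ with $\gamma+\delta\neq 0$; the subgroup generated by $x_{\pm\beta}$ maps onto a copy of $\mathrm{SL}_2$ whose commutator relations are not governed by that formula, and that is precisely why this lemma has content and requires the hypothesis $pq=0$. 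Luckily the misstep is not load-bearing: $[x_{-\beta}(p),x_{\beta}(q)]$ lies in the rank-one subgroup generated by $x_{\pm\beta}$ for the trivial reason that both of its factors do, and the genuinely nontrivial step is the rank-one word computation you then gesture at (manipulating $w_{\beta}$, $h_{\beta}$, and using $pq=0$ together with the unit hypotheses), which is exactly what Stein's cited corollary supplies and which you correctly decline to reprove.
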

 
 \begin{proof}
 The first part is shown in \cite[Corollary~2.9]{STEIN_STABILITY} for arbitrary Chevalley--Demazure group schemes. More precisely, the last line of the proof of \cite[Corollary~2.9]{STEIN_STABILITY} yields the equality
 \[
  [x_{-\beta}(p),x_{\beta}(q)]=\{1+p,1+q\}_{-\beta} \smash{\rlap{,}}
 \]
 which gives the desired result by the above remark. That $\{1+p,1+q\}_{\beta}$ is a central element is for example shown in \cite[Proposition~1.3.(a)]{STEIN_STABILITY}.
 \end{proof}

\begin{lemma}\label{lemma:commutator_product}
 In the ring $k[s,t] \slash (s^2, st, t^2)$, the equality
 \[
 [x_{\alpha}(\bar{s}),x_{-\alpha}\bigl((1+u^{-1})\bar{t}\bigr)]=[x_{\alpha}(\bar{s}),x_{-\alpha}(u^{-1}\bar{t})][x_{\alpha}(\bar{s}),x_{-\alpha}(\bar{t})]
 \]
 holds.
\end{lemma}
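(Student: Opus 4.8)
The plan is to use the fact that we are working in a ring where the relevant products vanish, so that the Steinberg symbol formula of Lemma~\ref{lemma:Stein_commutator_formula} applies and, crucially, so that Steinberg symbols behave bilinearly. First I would note that in $A = k[s,t]\slash(s^2,st,t^2)$ we have $\bar{s}\cdot\bar{t} = 0$, $\bar{s}^2 = 0$, and $\bar{t}^2 = 0$, so every product of two elements from the maximal ideal $(\bar{s},\bar{t})$ is zero. In particular $\bar s\cdot\bigl((1+u^{-1})\bar t\bigr) = 0$, $\bar s\cdot(u^{-1}\bar t)=0$, and $\bar s\cdot\bar t = 0$, and moreover $1+\bar s$, $1 + (1+u^{-1})\bar t$, $1 + u^{-1}\bar t$, and $1+\bar t$ are all units (being $1$ plus a nilpotent). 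Hence Lemma~\ref{lemma:Stein_commutator_formula} identifies each of the three commutators with a Steinberg symbol: writing $\beta = -\alpha$ so that $-\beta = \alpha$, we get $[x_\alpha(\bar s), x_{-\alpha}(p)] = \{1+p, 1+\bar s\}_{-\alpha}$ for $p \in \{(1+u^{-1})\bar t,\; u^{-1}\bar t,\; \bar t\}$.

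Next I would invoke bilinearity of the Steinberg symbol in this setting. The basic properties recorded in \cite[\S 3]{VdK} (or \cite[Proposition~1.3]{STEIN_STABILITY}) give $\{v_1 v_2, w\}_\beta = \{v_1, w\}_\beta\{v_2, w\}_\beta$ whenever all the symbols in sight are defined. The key computation is then
\[
(1+u^{-1}\bar t)(1+\bar t) = 1 + u^{-1}\bar t + \bar t + u^{-1}\bar t^2 = 1 + (1+u^{-1})\bar t,
\]
using $\bar t^2 = 0$. Therefore
\[
\{1+(1+u^{-1})\bar t,\; 1+\bar s\}_{-\alpha} = \{(1+u^{-1}\bar t)(1+\bar t),\; 1+\bar s\}_{-\alpha} = \{1+u^{-1}\bar t,\; 1+\bar s\}_{-\alpha}\{1+\bar t,\; 1+\bar s\}_{-\alpha},
\]
which translated back through Lemma~\ref{lemma:Stein_commutator_formula} is exactly the claimed identity
\[
[x_{\alpha}(\bar{s}),x_{-\alpha}\bigl((1+u^{-1})\bar{t}\bigr)]=[x_{\alpha}(\bar{s}),x_{-\alpha}(u^{-1}\bar{t})][x_{\alpha}(\bar{s}),x_{-\alpha}(\bar{t})].
\]
(The three symbols commute since, by Lemma~\ref{lemma:Stein_commutator_formula}, they are central, so the order on the right-hand side is immaterial.)

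The only point requiring care — and the step I expect to be the main obstacle — is checking that the bilinearity relation for Steinberg symbols is actually available in the precise form needed: one must confirm that the relevant symbols are defined (which follows from $\bar s$ annihilating the maximal ideal, so that the factorization $1+(1+u^{-1})\bar t = (1+u^{-1}\bar t)(1+\bar t)$ is a product of two elements each orthogonal to $\bar s$ in the sense required by Lemma~\ref{lemma:Stein_commutator_formula}), and that the sign/indexing conventions from \cite{VdK} versus \cite{STEIN_STABILITY} are handled consistently with the remark preceding Lemma~\ref{lemma:Stein_commutator_formula}. Once the multiplicativity of $\{-,1+\bar s\}_{-\alpha}$ in its first argument is in hand, the rest is the one-line ring computation above.
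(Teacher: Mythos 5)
Your conclusion is correct, and the key ring computation $(1+u^{-1}\bar t)(1+\bar t)=1+(1+u^{-1})\bar t$ is the right observation, but the step you flagged as the main obstacle is in fact a genuine gap rather than a point that only ``requires care.''

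The crux is your appeal to ``bilinearity of the Steinberg symbol'' from \cite[\S 3]{VdK} or \cite[Proposition~1.3]{STEIN_STABILITY}. No such general bimultiplicativity holds for the symbol $\{-,-\}_{\alpha}$ attached to the \emph{long} root of $\mathrm{Sp}$, and neither reference asserts it. The defining relations recorded in Theorem~\ref{thm:presentation_of_K2Sp} include only the cocycle relation $\{x,y\}_{\alpha}\{xy,z\}_{\alpha}=\{x,yz\}_{\alpha}\{y,z\}_{\alpha}$, which is strictly weaker than $\{xy,z\}_{\alpha}=\{x,z\}_{\alpha}\{y,z\}_{\alpha}$. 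Indeed, full bimultiplicativity would contradict the homomorphism $\rho\colon K_2\mathrm{Sp}(A)\to W(A)$ constructed in Corollary~\ref{cor:homomorphism_to_Witt_ring}: it sends $\{x,y\}_{\alpha}$ to the Pfister form $\llangle x,y\rrangle=\langle 1,-x\rangle\otimes\langle 1,-y\rangle$, and $\langle 1,-xy\rangle$ is not $\langle 1,-x\rangle+\langle 1,-y\rangle$ in the Witt ring. (There is also a small slip: with $\beta=-\alpha$, Lemma~\ref{lemma:Stein_commutator_formula} gives $[x_{\alpha}(\bar s),x_{-\alpha}(p)]=\{1+\bar s,1+p\}_{-\alpha}$, i.e.\ your two arguments are transposed; this does not affect the structure of the argument, but it does mean you would need multiplicativity in the \emph{second} variable, which is likewise not a general property.)

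The multiplicativity you want is true in this specific situation, but it has to be \emph{derived}, not cited. The paper does this at the level of commutators, bypassing Steinberg symbols entirely: factor $x_{-\alpha}\bigl((1+u^{-1})\bar t\bigr)=x_{-\alpha}(u^{-1}\bar t)\,x_{-\alpha}(\bar t)$ via the additivity relation in the Steinberg group, then apply the group-theoretic identity $[a,bc]=[a,b]\cdot b[a,c]b^{-1}$ and use the \emph{centrality} of $[x_{\alpha}(\bar s),x_{-\alpha}(\bar t)]$ (which is what Lemma~\ref{lemma:Stein_commutator_formula} actually provides) to drop the conjugation. If you want to keep your Steinberg-symbol phrasing, the fix is to prove the needed multiplicativity from exactly this commutator identity in the ``orthogonal'' case $p\bar s=p'\bar s=pp'=0$, rather than invoking it as a known property of symbols.
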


\begin{proof}
Since $x_{-\alpha}\bigl((1+u^{-1})\bar{t}\bigr)=x_{-\alpha}(u^{-1}\bar{t}) x_{-\alpha}(\bar{t})$, this follows from the commutator formula $[a,bc]=[a,b]\cdot b[a,c]b^{-1}$ and the centrality of $[x_{\alpha}(\bar{s}),x_{-\alpha}(\bar{t})]$ (see Lemma~\ref{lemma:Stein_commutator_formula}).
\end{proof}

\begin{lemma}\label{lemma:final_form}
 The image of $z$ in $\mathrm{StSp}\bigl(k[s,t] \slash (s^2,st,t^2) \bigr)$ lies in $K_2 \mathrm{Sp}$ and is given by
 \[
 \bar{z}=\{1+u^{-1} \bar{t},1+u\bar{s}\}_{\alpha} \{1+\bar{t},1+\bar{s}\}_{\alpha}^{-1} \smash{\rlap{.}}
 \]
\end{lemma}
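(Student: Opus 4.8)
The plan is to transport the commutator expression for $z$ furnished by Lemma~\ref{lemma:z_as_commutators} along the projection $\pi\colon k[s,t]\rightarrow k[s,t]\slash(s^2,st,t^2)$ and to simplify it there. Write $Q$ for the quotient ring $k[s,t]\slash(s^2,st,t^2)$ and $\bar{s},\bar{t}\in Q$ for the images of $s,t$. The feature of $Q$ that makes everything work is that every product of two elements of $\{\bar{s},\bar{t}\}$ vanishes and every element of the form $1+(\text{nilpotent})$ is a unit; hence each of the three commutators occurring in Lemma~\ref{lemma:z_as_commutators}, once pushed into $\mathrm{StSp}(Q)$, is of the form $[x_{-\beta}(p),x_{\beta}(q)]$ with $pq=0$ and $1+p,1+q\in Q^{\times}$, so that Stein's formula (Lemma~\ref{lemma:Stein_commutator_formula}) applies and in particular produces a central element.

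First I would write down the image of $z$. By Lemma~\ref{lemma:z_as_commutators} it equals
\[
\bar{z}=[x_{\alpha}(u\bar{s}),x_{-\alpha}(u^{-1}\bar{t})]\,[x_{-\alpha}(u^{-1}\bar{t}),x_{\alpha}(\bar{s})]\,[x_{\alpha}(\bar{s}),x_{-\alpha}\bigl((1+u^{-1})\bar{t}\bigr)].
\]
Expanding the last factor by Lemma~\ref{lemma:commutator_product} into $[x_{\alpha}(\bar{s}),x_{-\alpha}(u^{-1}\bar{t})]\,[x_{\alpha}(\bar{s}),x_{-\alpha}(\bar{t})]$, the middle commutator $[x_{-\alpha}(u^{-1}\bar{t}),x_{\alpha}(\bar{s})]$ is exactly the inverse of the first newly appearing commutator $[x_{\alpha}(\bar{s}),x_{-\alpha}(u^{-1}\bar{t})]$, since reversing the two entries of a commutator inverts it. These two adjacent factors cancel, leaving
\[
\bar{z}=[x_{\alpha}(u\bar{s}),x_{-\alpha}(u^{-1}\bar{t})]\,[x_{\alpha}(\bar{s}),x_{-\alpha}(\bar{t})].
\]

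Now I would apply Lemma~\ref{lemma:Stein_commutator_formula} to the two surviving commutators. Already this exhibits $\bar{z}$ as a product of Steinberg symbols (each of which maps to the identity in $\mathrm{Ep}(Q)$), hence as an element of $K_2\mathrm{Sp}(Q)=\ker\bigl(\mathrm{StSp}(Q)\rightarrow\mathrm{Ep}(Q)\bigr)$, which is the first assertion. For the explicit value I would first rewrite each of the two commutators in the orientation $[x_{-\alpha}(-),x_{\alpha}(-)]$, again using $[g,h]^{-1}=[h,g]$, so that Lemma~\ref{lemma:Stein_commutator_formula} delivers symbols indexed by the simple long root $\alpha$, and then read off the product, invoking the basic properties of van der Kallen's symbols from \cite[\S 3]{VdK} (centrality, skew-symmetry, and the comparison between $\{-,-\}_{\alpha}$ and $\{-,-\}_{-\alpha}$) to bring it into the form $\{1+u^{-1}\bar{t},1+u\bar{s}\}_{\alpha}\{1+\bar{t},1+\bar{s}\}_{\alpha}^{-1}$.

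I expect the only delicate point to be the bookkeeping in this last step: one must pin down a single convention for the commutator and, in tandem, for the Steinberg symbol — the excerpt already warns of the sign mismatch between \cite{STEIN_STABILITY} and \cite{VdK} — and then track how the choice of $\alpha$ versus $-\alpha$ and the order of the two symbol arguments propagate through Lemma~\ref{lemma:Stein_commutator_formula} and the cancellation. Nothing here is conceptually hard, but it is precisely where a stray inverse could slip in, so I would fix conventions once and check the computation for internal consistency against the identity $[g,h]^{-1}=[h,g]$ used above.
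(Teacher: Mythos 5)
Your approach is essentially the paper's: push the commutator product from Lemma~\ref{lemma:z_as_commutators} into the quotient, use Lemma~\ref{lemma:commutator_product}, apply Stein's formula, and convert to symbols at $\alpha$. Your cancellation observation is correct and is a slightly more transparent way to see what the paper compresses into ``combining Lemma~\ref{lemma:z_as_commutators} with Lemma~\ref{lemma:commutator_product}'': after expanding the third factor the adjacent terms $[x_{-\alpha}(u^{-1}\bar{t}),x_{\alpha}(\bar{s})]$ and $[x_{\alpha}(\bar{s}),x_{-\alpha}(u^{-1}\bar{t})]$ are inverses and cancel, yielding $\bar{z}=[x_{\alpha}(u\bar{s}),x_{-\alpha}(u^{-1}\bar{t})]\,[x_{\alpha}(\bar{s}),x_{-\alpha}(\bar{t})]$, which is exactly the paper's intermediate form. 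The argument that $\bar{z}\in K_2\mathrm{Sp}$ via centrality of the symbols is also fine (the paper instead uses that $\psi(M(u))$ is already the identity modulo $st$, but either works).

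The one place where your proposal falls short of a proof is precisely the ``stray inverse'' you flag. If you reverse each surviving commutator to feed it to Lemma~\ref{lemma:Stein_commutator_formula} with $\beta=\alpha$, you get
\[
\bar z = \{1+u^{-1}\bar t, 1+u\bar s\}_{\alpha}^{-1}\,\{1+\bar t,1+\bar s\}_{\alpha}^{-1},
\]
which does \emph{not} match the stated formula; the first factor carries an unwanted inverse. The bridge is not ``skew-symmetry'' or the $\alpha$ vs.\ $-\alpha$ comparison (\cite[3.2.(j)]{VdK}) in isolation, but a specific order-$2$ argument: since $\mathrm{char}\,k=2$ and $\bar s^2=0$ we have $1+u\bar s=(1+u\bar s)^{-1}=-(1+u\bar s)^{-1}$, and then relations \cite[3.2.(f4)]{VdK} and \cite[3.2.(f7)]{VdK} give $\{1+u^{-1}\bar t,1+u\bar s\}_{\alpha}^{2}=\{1+u^{-1}\bar t,1\}_{\alpha}=1$, hence $\{1+u^{-1}\bar t,1+u\bar s\}_{\alpha}^{-1}=\{1+u^{-1}\bar t,1+u\bar s\}_{\alpha}$. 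Without that step the formula you derive and the formula in the statement are different-looking elements, and the asymmetry of the target expression (one inverse, not two) is a hint that some nontrivial relation, not just bookkeeping of conventions, must be invoked.
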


\begin{proof}
 That $\bar{z}$ lies in $K_2 \mathrm{Sp}$ follows from the computation of the image of $z$ in $\mathrm{Ep}(k[s,t])$ after reduction modulo $(s^2,st,t^2)$ (since this image is given by $\psi \bigl(M(u)\bigr)$, which is equal to the identity already modulo $st$).

 Combining the computation in $k[s,t]$ (see Lemma~\ref{lemma:z_as_commutators}) with the product formula of Lemma~\ref{lemma:commutator_product}, we find that
 \[
 \bar{z}=[x_{\alpha}(u\bar{s}),x_{-\alpha}(u^{-1}\bar{t})][x_{\alpha}(\bar{s}),x_{-\alpha}(\bar{t})]
 \]
 holds, which is equal to
 \[
 \{1+u\bar{s},1+u^{-1} \bar{t}\}_{-\alpha} \cdot  \{1+\bar{s},1+ \bar{t}\}_{-\alpha}
 \]
 by Lemma~\ref{lemma:Stein_commutator_formula}. By \cite[3.2.(j)]{VdK}, we have
 \[
 \bar{z}= \{1+u^{-1}\bar{t}, 1+u\bar{s}\}_{\alpha}^{-1} \cdot  \{1+ \bar{t},1+\bar{s}\}_{\alpha}^{-1}
 \]
 in $\mathrm{StSp}\bigl(k[s,t] \slash (s^2,st, t^2)\bigr)$.
 
 Since $1+u\bar{s}=-(1+u\bar{s})^{-1}$, we have by \cite[3.2.(f4)]{VdK} and \cite[3.2.(f7)]{VdK} the equality
 \[
 \{1+u^{-1}\bar{t}, 1+u\bar{s}\}_{\alpha} \{1+u^{-1}\bar{t}, 1+u\bar{s}\}_{\alpha}=\{1+u^{-1}\bar{t},1\}_{\alpha}=1 \smash{\rlap{,}}
 \]
 so $\{1+u^{-1}\bar{t}, 1+u\bar{s}\}_{\alpha}^{-1}=\{1+u^{-1}\bar{t}, 1+u\bar{s}\}_{\alpha}$ holds, which concludes the proof.
\end{proof}

Similarly one can show that $\{1+\bar{t},1+\bar{t}\}_{\alpha}$ has order $2$, so we could rewrite the formula of the above lemma so that no inverses appear. The form given in the lemma will however be more convenient for us. In order to show that $\bar{z}$ is a nontrivial element of $K_2 \mathrm{Sp}\bigl(k[s,t]\slash(s^2,st,t^2)\bigr)$, we will use the following theorem of van der Kallen.
 
 \begin{thm}[van der Kallen]\label{thm:presentation_of_K2Sp}
 Let $A$ be an $U$-irreducible ring (see \cite[Definition~1.3]{VdK}), for example, a semilocal ring whose residue fields are all infinite (see \cite[Example~1.5.(d)]{VdK}). Then $K_2 \mathrm{Sp}(A)$ is isomorphic to the group with generators $\{u,v\}_{\alpha}$ for $u, v \in A^{\times}$, subject to the following relations for all $x,y,z,w \in A^{\times}$ such that $(1-w) \in A^{\times}$:
 \begin{enumerate}
 \item[(a)] $\{x,y\}_{\alpha} \{xy,z\}_{\alpha}=\{x,yz\}_{\alpha} \{y,z\}_{\alpha}$;
 \item[(b)] $\{1,1\}_{\alpha}=1$;
 \item[(c)] $\{x,y\}_{\alpha}=\{x^{-1},y^{-1}\}_{\alpha}$;
 \item[(e)] $\{w,y\}_{\alpha}=\{w,(1-w)y\}_{\alpha}$.
\end{enumerate}  
 \end{thm}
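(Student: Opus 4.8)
This statement is van der Kallen's generalization of Matsumoto's theorem from fields to rings with many units, so the plan is to deduce it from \cite{VdK}. First I would recall why the symbols $\{u,v\}_{\alpha}$ (for the unique simple long root $\alpha$) generate $K_2 \mathrm{Sp}(A)$: for a $U$-irreducible ring, van der Kallen's stability results show that the unstable groups $K_2 \mathrm{Sp}_{2r}(A)$ already stabilize at small rank, and in low rank the kernel of $\mathrm{StSp}_{2r}(A) \to \mathrm{Ep}_{2r}(A)$ is generated by the images of the commutators $[x_{-\alpha}(p),x_{\alpha}(q)]$ with $pq=0$, which are symbols $\{1+p,1+q\}_{\alpha}$ by Lemma~\ref{lemma:Stein_commutator_formula}. (The ``many units'' hypothesis built into $U$-irreducibility is precisely what lets one write a general unit in the form $1+p$ with $p$ suitably generic, so that all symbols arise this way.) Then I would check that (a)--(e) hold in $K_2 \mathrm{Sp}(A)$: these are all among the symbol identities collected in \cite[3.2]{VdK} --- (a) is the cocycle/bimultiplicativity relation, (b) is normalization, (c) is an inversion rule, and (e) is the Steinberg relation --- so the group $G$ presented by (a)--(e) admits a surjection onto $K_2 \mathrm{Sp}(A)$.

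The real content is that this surjection is an isomorphism, i.e.\ that there are no further relations among the symbols. Here I would invoke van der Kallen's presentation theorem directly: \cite{VdK} provides a presentation of the symplectic Steinberg group (or of a central extension controlling it) whose defining relators, once restricted to $K_2 \mathrm{Sp}(A)$, are consequences of the symbol relations; the proof is a careful bookkeeping with the Chevalley commutator formula in which the ``generic position'' afforded by $U$-irreducibility ensures that the listed relations are the only obstructions. I expect the main obstacle to be bridging the gap between the precise form of the presentation in \cite{VdK} --- which may involve additional relations, a different indexing of the symbols, or the opposite sign convention discussed just before Lemma~\ref{lemma:Stein_commutator_formula} --- and the clean four-relation list (a), (b), (c), (e) stated here: one must either verify that any extra relations are redundant given (a)--(e), or carry out the index/sign translation, using only the manipulations of \cite[3.2]{VdK}. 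This is delicate but routine once the dictionary with \cite{VdK} is in place.
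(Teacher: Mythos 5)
Your approach is the paper's approach: this is stated as a direct consequence of van der Kallen's presentation theorem, and the paper's proof is a one-line citation. You correctly anticipate that the presentation in \cite{VdK} will not match the four-relation list verbatim and that some bridging is needed; the specific gap is that van der Kallen's Theorem~3.4 lists a fifth relation~(d), and the resolution is his Remark~3.5, which shows that (d) is a consequence of (a), (b), (c), (e) for $U$-irreducible rings. So the bookkeeping you flag as ``delicate but routine'' is in fact already carried out in \cite{VdK} and one only has to cite it. The speculative description you give of how the presentation theorem is proved internally (stability, commutator formula, generic position) is not part of the paper's argument and is not needed; the paper takes the result as a black box.
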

 
 \begin{proof}
 This follows from \cite[Theorem~3.4]{VdK} and \cite[Remark~3.5]{VdK}, which states that the omitted relation (d) follows from the others for all $U$-irreducible rings $A$.
 \end{proof}
 
 Suslin has constructed a homomorphism from $K_2 \mathrm{Sp}(F)$ to the Witt ring $W(F)$ of a field $F$ (see \cite[\S 6]{SUSLIN}). With the above theorem, we can extend this from fields to $U$-irreducible rings. Namely, we need to describe elements in the Witt ring which satisfy the above four relations. It turns out that the two-fold Pfister forms do satisfy these relations over a general ring. A \emph{symmetric bilinear space} is a finitely generated projective module equipped with a symmetric bilinear form. If the underlying module is free, we will usually denote the space by the corresponding matrix. We write $\langle a_1, \ldots, a_n \rangle$ for a diagonal bilinear form with the $a_i$ on the diagonal. The two-fold \emph{Pfister form} is defined to be
 \[
 \llangle a,b \rrangle \defl \langle 1,-a \rangle \otimes \langle 1,-b \rangle \cong \langle 1,-a,-b,ab \rangle
 \]
 for $a, b \in A^{\times}$. We use Knebusch's definition \cite[\S 3.5]{KNEBUSCH} of the Witt ring $W(A)$ of a commutative ring $A$ and we also use his notation $A(\lambda,\mu)$ for the symmetric bilinear form with the matrix $\bigl( \begin{smallmatrix} \lambda & 1 \\ 1 & \mu \end{smallmatrix} \bigr)$.
 
 \begin{prop}\label{prop:relations_in_Witt_ring}
 Let $A$ be a commutative ring. Then the following relations hold in $W(A)$:
 \begin{enumerate}
 \item[(a)] $\llangle a, b \rrangle \llangle ab,c \rrangle=\llangle a, bc \rrangle \llangle b, c \rrangle$ for all $a,b,c \in A^{\times}$;
 \item[(b)] $\llangle 1,1 \rrangle=1$;
 \item[(c)] $\llangle a,b \rrangle=\llangle a^{-1}, b^{-1} \rrangle$ for all $a,b \in A^{\times}$;
 \item[(e)] $\llangle a, b \rrangle=\llangle a, (1-a)b \rrangle$ for all $a,b \in A^{\times}$ such that $1-a \in A^{\times}$.
 \end{enumerate}
 \end{prop}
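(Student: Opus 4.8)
The plan is to rewrite the two-fold Pfister form $\llangle a,b\rrangle$ inside $W(A)$ in terms of the classes of rank-one forms, and then to reduce all four relations to formal arithmetic in the commutative ring $W(A)$ together with a single genuine identity about Pfister forms, namely $\llangle w,1-w\rrangle=0$. Throughout, the juxtapositions in the displayed relations are read as the operation of the abelian group $(W(A),+)$ — so that (b) asserts that $\llangle 1,1\rrangle$ is the zero element — this being the form in which the relations of van der Kallen's presentation (Theorem~\ref{thm:presentation_of_K2Sp}) will be matched by the eventual assignment $\{u,v\}_{\alpha}\mapsto\llangle u,v\rrangle$.

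First I would record the elementary facts about the class $\langle a\rangle\in W(A)$ of the rank-one space attached to a unit $a\in A^{\times}$: tensoring rank-one spaces gives $\langle a\rangle\langle b\rangle=\langle ab\rangle$; in $\langle a,-a\rangle$ the diagonal copy of $A$ spanned by $(1,1)$ is a direct summand equal to its own orthogonal complement, so $\langle a,-a\rangle$ is metabolic and hence $\langle -a\rangle=-\langle a\rangle$ in $W(A)$; rescaling by the unit $a^{-1}$ gives an isometry $\langle a\rangle\cong\langle a^{-1}\rangle$, and in particular $\langle a\rangle^{2}=\langle a^{2}\rangle=\langle 1\rangle=1$. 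Together these yield $\langle 1,-a\rangle=1-\langle a\rangle$ and therefore
\[
\llangle a,b\rrangle=(1-\langle a\rangle)(1-\langle b\rangle)\quad \text{in } W(A).
\]

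The one nonformal input is the identity $\llangle w,1-w\rrangle=0$ for $w,1-w\in A^{\times}$. To prove it I would exhibit an explicit isometry $\langle 1,-w\rangle\cong\langle 1-w,-w(1-w)\rangle$, for instance via the change of basis with matrix $\left(\begin{smallmatrix}1&w\\1&1\end{smallmatrix}\right)$, which is invertible precisely because $\det=1-w$ is a unit; since $\langle 1-w,-w(1-w)\rangle=\langle 1-w\rangle\otimes\langle 1,-w\rangle$, this says $(1-\langle w\rangle)=\langle 1-w\rangle(1-\langle w\rangle)$ in $W(A)$, i.e. $(1-\langle 1-w\rangle)(1-\langle w\rangle)=0$, which is exactly $\llangle w,1-w\rrangle=0$. (More conceptually: a binary nondegenerate space representing a unit splits off the corresponding rank-one space, and its orthogonal complement is pinned down by the discriminant.)

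With these in hand the four relations follow. Relation (b) is $(1-\langle 1\rangle)^{2}=0$; relation (c) is immediate from $\langle a^{-1}\rangle=\langle a\rangle$; writing $\alpha=\langle a\rangle$, $\beta=\langle b\rangle$, $\gamma=\langle c\rangle$ (so $\langle ab\rangle=\alpha\beta$ and $\langle bc\rangle=\beta\gamma$), relation (a) becomes the algebraic identity
\[
(1-\alpha)(1-\beta)+(1-\alpha\beta)(1-\gamma)-(1-\alpha)(1-\beta\gamma)-(1-\beta)(1-\gamma)=\beta(1-\alpha)\bigl[(\gamma-1)+(1-\gamma)\bigr]=0;
\]
and relation (e) follows because $\llangle w,y\rrangle-\llangle w,(1-w)y\rrangle=-\langle y\rangle(1-\langle w\rangle)(1-\langle 1-w\rangle)=0$ by the previous paragraph (using $\langle(1-w)y\rangle=\langle 1-w\rangle\langle y\rangle$). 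I expect the only place needing genuine care to be the identity $\llangle w,1-w\rrangle=0$: over an arbitrary commutative ring the field-theoretic shortcuts (diagonalization, Witt cancellation) are not available, so one must produce explicit isometries, and the hypothesis that $1-w$ and $w$ be units is precisely what makes the relevant change of basis invertible; everything else is routine arithmetic in $W(A)$.
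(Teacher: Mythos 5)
Your proof is correct, but it takes a genuinely different route from the paper's. The paper verifies each relation separately by writing both sides out as orthogonal sums of diagonal forms and then exhibiting explicit metabolic summands (for (a) and (b), cancelling $\langle\lambda,-\lambda\rangle$; for (c) and (e), producing change-of-basis matrices that display the difference as an orthogonal sum of spaces $A(\lambda,0)$). You instead exploit the commutative ring structure of $W(A)$: after establishing the elementary identities $\langle a\rangle\langle b\rangle=\langle ab\rangle$, $\langle -a\rangle=-\langle a\rangle$, and $\langle a^{-1}\rangle=\langle a\rangle$, you write $\llangle a,b\rrangle=(1-\langle a\rangle)(1-\langle b\rangle)$ and reduce all four relations to formal ring arithmetic plus the single Steinberg-type identity $\llangle w,1-w\rrangle=0$, which you prove by the explicit rank-$2$ isometry $\langle 1,-w\rangle\cong\langle 1-w\rangle\otimes\langle 1,-w\rangle$ given by the change of basis $\left(\begin{smallmatrix}1&w\\1&1\end{smallmatrix}\right)$, invertible because $1-w$ is a unit. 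The two approaches are of comparable length. The paper's is more self-contained and hands-on, not presupposing the ring structure on $W(A)$; yours is more conceptual, isolates the one genuinely geometric input (the Steinberg relation for Pfister forms over a ring, with units replacing the field-theoretic hypothesis), and makes visible why the relations (a)--(c) are purely formal consequences of the multiplicativity and two-torsion of rank-one classes. Both your rank-$2$ isometry and the paper's rank-$4$ basis change in the proof of (e) carry the same geometric content, so nothing is lost or gained in generality.
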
 
 
 \begin{proof}
 For the first claim, note that the left hand side is
 \[
 \langle 1,-a,-b,ab \rangle+\langle 1,-ab,-c,abc \rangle = \langle 1,1,-a,-b,-c,abc \rangle + \langle ab,-ab\rangle
 \]
 and the right hand side is
 \[
 \langle 1,-a,-bc,abc \rangle + \langle 1,-b,-c,bc \rangle = \langle 1,1,-a,-b,-c,abc \rangle + \langle bc,-bc\rangle \smash{\rlap{,}}
 \]
 so the claim follows from the fact that $\langle \lambda,-\lambda \rangle=0$ in $W(A)$ (see \cite[Folgerung~3.1.4]{KNEBUSCH}).
 
 The second claim also follows immediately from this fact.
 
 To see $(c)$, note that we have to show that
 \[
 \langle 1,-a,-b,ab \rangle= \langle 1, -a^{-1},-b^{-1}, (ab)^{-1} \rangle
 \]
 or equivalently that
\[
\langle a,-a^{-1} \rangle + \langle b ,-b^{-1} \rangle + \langle -ab,(ab)^{-1} \rangle=0
\]
holds.

 For $x \in A^{\times}$, the bilinear form $\langle x, -x^{-1} \rangle$ has matrix $A(x,0)$ with respect to the basis $\bigl(\begin{smallmatrix} 1 \\ 0 \end{smallmatrix} \bigr)$, $\bigl(\begin{smallmatrix} x^{-1} \\ 1 \end{smallmatrix} \bigr)$, so the left hand side above is metabolic, hence equal to $0$ in $W(A)$.
 
 For the final claim, note that
 \begin{align*}
 \llangle a, b \rrangle - \llangle a, (1-a)b \rrangle &= \langle 1,-a,-b,ab \rangle- \langle 1, -a, -(1-a)b,a(1-a)b \rangle \\
 &=\langle -b,ab,(1-a)b,-a(1-a)b \rangle
 \end{align*}
 holds. Since $1-a$ is a unit, the four vectors
 \[
 v_1 \defl \bigl((1-a)b\bigr)^{-1} e_3, \quad 
 v_2 \defl \begin{pmatrix}
 1 \\ 1 \\ 1 \\ 0
 \end{pmatrix}, \quad
 v_3 \defl \bigl(-a(1-a)b\bigr)^{-1} e_4, \quad
 v_4 \defl \begin{pmatrix}
 a \\ 1 \\ 0 \\ 1
 \end{pmatrix}
 \]
 form a basis of $A^4$. With respect to this basis, the bilinear form above has the matrix
 \[
 \begin{pmatrix}
 \lambda & 1 & 0 & 0 \\ 1 & 0 & 0 & 0 \\ 0 & 0 & \mu & 1 \\ 0 & 0 & 1 & 0
 \end{pmatrix}=A(\lambda, 0) \perp A(\mu, 0)
 \]
 for some $\lambda, \mu \in A^{\times}$, hence it is metabolic, so equal to $0$ in $W(A)$. 
 \end{proof}
 
 \begin{cor}\label{cor:homomorphism_to_Witt_ring}
  If $A$ is $U$-irreducible in the sense of \cite[Definition~1.3]{VdK}, then there exists a homomorphism
 \[
 \rho \colon K_2 \mathrm{Sp}(A) \rightarrow W(A)
 \]
 of abelian groups such that $\rho(\{x,y\}_{\alpha})=\llangle x,y \rrangle$ for all $x,y \in A^{\times}$
 \end{cor}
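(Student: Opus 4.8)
The plan is to read $\rho$ off from van der Kallen's presentation. First I would apply Theorem~\ref{thm:presentation_of_K2Sp} --- legitimate since $A$ is $U$-irreducible --- to present $K_2\mathrm{Sp}(A)$ as the group on generators $\{u,v\}_{\alpha}$, $u,v\in A^{\times}$, subject to the four families of relations (a), (b), (c), (e). To construct a homomorphism out of a group given by generators and relations it suffices to name, in the target, elements indexed by the generators which satisfy the images of those relations.

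The target here is the underlying abelian group of $W(A)$, and I would assign to the generator $\{u,v\}_{\alpha}$ the two-fold Pfister form $\llangle u,v\rrangle\in W(A)$. Proposition~\ref{prop:relations_in_Witt_ring} is then precisely the statement that these Pfister forms obey, in $W(A)$, the relations carried over from van der Kallen's presentation: part (a) there is the image of relation (a), part (b) of relation (b) (with the neutral element of the Witt group on the right), part (c) of relation (c), and part (e) of relation (e) --- and the side condition $1-w\in A^{\times}$ appearing in van der Kallen's relation (e) is exactly the hypothesis of Proposition~\ref{prop:relations_in_Witt_ring}(e). By the universal property of the presentation this yields a group homomorphism $\rho\colon K_2\mathrm{Sp}(A)\to W(A)$ with $\rho(\{u,v\}_{\alpha})=\llangle u,v\rrangle$, which is the assertion. (The only translation needed is that the group operation of $K_2\mathrm{Sp}(A)$, written multiplicatively in Theorem~\ref{thm:presentation_of_K2Sp}, becomes the addition of the Witt group; Proposition~\ref{prop:relations_in_Witt_ring} is already phrased so that this translation is transparent.)

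I do not anticipate a real obstacle, as the two substantive ingredients --- the presentation of $K_2\mathrm{Sp}(A)$ and the verification of the Witt-ring identities --- are already available; what remains is just to line the two lists of relations up against each other. The only point worth a second look is that relation (d) of \cite{VdK}, absent from Theorem~\ref{thm:presentation_of_K2Sp}, may be safely ignored: this is so because the proof of Theorem~\ref{thm:presentation_of_K2Sp} appeals to \cite[Remark~3.5]{VdK}, where (d) is shown to be a consequence of the other relations over any $U$-irreducible ring, so it imposes no condition that $\rho$ would be obliged to respect.
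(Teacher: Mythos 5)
Your proposal is correct and matches the paper's own proof, which simply invokes Theorem~\ref{thm:presentation_of_K2Sp} together with Proposition~\ref{prop:relations_in_Witt_ring} and appeals to the universal property of the presentation. You have spelled out the same argument in more detail, including the (correct) observation that relation (d) need not be checked because it is redundant over $U$-irreducible rings.
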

 
 \begin{proof}
 This is an immediate consequence of Theorem~\ref{thm:presentation_of_K2Sp} and Proposition~\ref{prop:relations_in_Witt_ring}.
 \end{proof}
 
 Note that the existence of an element $u_0 \in k \setminus k^2$ implies that $k$ is infinite (since all finite fields of characteristic $2$ are perfect). Thus $k[s,t] \slash (s^2,st,t^2)$ is a local ring with infinite residue field, hence $U$-irreducible (see \cite[Example~1.5.(d)]{VdK}). To show nontriviality of $\bar{z}=\{1+u^{-1} \bar{t},1+u\bar{s}\}_{\alpha} \{1+\bar{t},1+\bar{s}\}_{\alpha}^{-1}$, it suffices to check that its image under the homomorphism
 \[
 \rho \colon K_2 \mathrm{Sp}\bigl( k[s,t] \slash (s^2,st,t^2) \bigr) \rightarrow W\bigl( k[s,t] \slash (s^2,st,t^2) \bigr)
 \]
 is nontrivial. This is equivalent to the claim that
 \[
 \llangle 1+u^{-1} \bar{t}, 1+u\bar{s} \rrangle \quad \text{and} \quad \llangle 1+\bar{t}, 1+\bar{s} \rrangle
 \]
 are distinct elements of $W\bigl( k[s,t] \slash (s^2,st,t^2) \bigr)$.
 
 \begin{lemma}\label{lemma:image_of_zbar_in_WA}
 There exist isomorphisms
 \[
  \llangle 1+u^{-1} \bar{t}, 1+u\bar{s} \rrangle \cong A(u\bar{s},u^{-1}\bar{t}) \perp A(1,0) \quad \text{and} \quad \llangle 1+\bar{t}, 1+\bar{s} \rrangle \cong A(\bar{s},\bar{t}) \perp A(1,0)
 \]
 of symmetric bilinear spaces over $A=k[s,t] \slash (s^2,st,t^2)$. Thus we have
 \[
 \rho(\bar{z})=A(u \bar{s},u^{-1} \bar{t})-A(\bar{s},\bar{t})
 \]
 in $W(A)$.
 \end{lemma}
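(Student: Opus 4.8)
The plan is to deduce both displayed isometries from a single uniform statement: for any commutative ring $A$ of characteristic $2$ and any $p, q \in A$ with $p^2 = q^2 = pq = 0$, there is an isometry of symmetric bilinear spaces
\[
\llangle 1+p, 1+q \rrangle \cong A(q,p) \perp A(1,0)
\]
over $A$. Since $\bar{s}^2 = \bar{t}^2 = \bar{s}\bar{t} = 0$ in $A = k[s,t]\slash(s^2,st,t^2)$, applying this with $(p,q) = (u^{-1}\bar{t}, u\bar{s})$ gives the first isometry and applying it with $(p,q) = (\bar{t}, \bar{s})$ gives the second.

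To prove the uniform statement I would first note that, since $A$ has characteristic $2$ and $pq = 0$, one has $\llangle 1+p, 1+q \rrangle \cong \langle 1, 1+p, 1+q, 1+p+q \rangle$, which I regard as the bilinear form $B$ on the free module $A^4$ with standard basis $e_1, e_2, e_3, e_4$ satisfying $B(e_i, e_j) = 0$ for $i \neq j$ and $B(e_1,e_1) = 1$, $B(e_2,e_2) = 1+p$, $B(e_3,e_3) = 1+q$, $B(e_4,e_4) = 1+p+q$. The heart of the argument is then to write down the new basis
\[
g_1 = e_2 + q e_3 + e_4, \qquad g_2 = p e_2 + e_3 + e_4, \qquad g_3 = e_1, \qquad g_4 = e_1 + e_2 + e_3 + e_4
\]
and to verify by direct computation, using $2 = 0$, $p^2 = q^2 = 0$, and the orthogonality of the $e_i$, that $B(g_1,g_1) = q$, $B(g_2,g_2) = p$, $B(g_1,g_2) = 1$, $B(g_3,g_3) = 1$, $B(g_3,g_4) = 1$, $B(g_4,g_4) = 0$, and that $B(g_i,g_j) = 0$ for every other pair of distinct indices. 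In the basis $(g_1,g_2,g_3,g_4)$ the form $B$ then has the block-diagonal Gram matrix $A(q,p) \perp A(1,0)$, which is the asserted isometry.

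The only step that is more than a one-line verification is checking that $(g_1, g_2, g_3, g_4)$ is genuinely an $A$-basis of $A^4$: here I would compute the determinant of the matrix expressing the $g_i$ in terms of the $e_j$ and find that it equals $1+p+q$, which is a unit of $A$ with inverse $1+p+q$ (since $(1+p+q)^2 = 1$ under our hypotheses), so that the change of basis is invertible. Granting the uniform statement, the formula for $\rho(\bar{z})$ is then formal: by Lemma~\ref{lemma:final_form} we have $\bar{z} = \{1+u^{-1}\bar{t}, 1+u\bar{s}\}_{\alpha}\{1+\bar{t}, 1+\bar{s}\}_{\alpha}^{-1}$ in $K_2 \mathrm{Sp}(A)$, and applying the additive homomorphism $\rho$ of Corollary~\ref{cor:homomorphism_to_Witt_ring}, which sends $\{x,y\}_{\alpha}$ to $\llangle x, y \rrangle$, and cancelling the common summand $A(1,0)$ in $W(A)$, gives $\rho(\bar{z}) = \llangle 1+u^{-1}\bar{t}, 1+u\bar{s} \rrangle - \llangle 1+\bar{t}, 1+\bar{s} \rrangle = A(u\bar{s}, u^{-1}\bar{t}) - A(\bar{s}, \bar{t})$. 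I do not expect any genuine obstacle here; the basis check is the one point needing a moment's care, and everything else is routine bookkeeping.
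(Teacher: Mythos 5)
Your proposal is correct and takes the same strategic route as the paper: express the two-fold Pfister form as the diagonal form $\langle 1, 1+p, 1+q, 1+p+q\rangle$ (using characteristic $2$ and $pq=0$), and then exhibit an explicit new basis of $A^4$ in which the Gram matrix is $A(q,p) \perp A(1,0)$. The only genuine difference is the choice of change-of-basis: the paper uses
\[
v_1 = \begin{pmatrix} 0 \\ 1+p+q \\ 0 \\ 1+p \end{pmatrix},\quad
v_2 = \begin{pmatrix} 0 \\ 0 \\ 1+p+q \\ 1+q \end{pmatrix},\quad
v_3 = e_1,\quad
v_4 = \begin{pmatrix} 1 \\ 1 \\ 1 \\ 1 \end{pmatrix}
\]
and verifies invertibility by reducing the determinant modulo the maximal ideal, whereas your basis $g_1 = e_2 + qe_3 + e_4$, $g_2 = pe_2 + e_3 + e_4$, $g_3 = e_1$, $g_4 = e_1+e_2+e_3+e_4$ has entries in $\{0,1,p,q\}$ and you compute the exact determinant $1+p+q$, a self-inverse unit. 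Your formulation as a uniform lemma over any commutative $\mathbb{F}_2$-algebra with $p^2=q^2=pq=0$ is a small but genuine improvement in clarity: the paper instead does the computation for general $u$ and then says ``specializing to $u=1$'' to get the second isometry, which is the same content packaged less cleanly. I checked your Gram-matrix computations (including the off-diagonal vanishing $B(g_1,g_4)=B(g_2,g_4)=0$) and the determinant; they are all correct. The deduction of the formula for $\rho(\bar z)$ from Lemma~\ref{lemma:final_form} and Corollary~\ref{cor:homomorphism_to_Witt_ring} is exactly as in the paper.
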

 
 \begin{proof}
 The second claim follows from the first, combined with the formula of $\bar{z}$ given in Lemma~\ref{lemma:final_form} and the defining property of the homomorphism $\rho$ of Corollary~\ref{cor:homomorphism_to_Witt_ring}.
 
 To see the first claim, note that we have
 \[
   \llangle 1+u^{-1} \bar{t}, 1+u\bar{s} \rrangle = \langle 1, 1+u^{-1} \bar{t}, 1+u\bar{s}, 1+u^{-1}\bar{t} + u\bar{s} \rangle
 \]
 by definition. The set
\[
v_1 = \begin{pmatrix}
0 \\ 1+u^{-1} \bar{t} + u \bar{s} \\ 0 \\ 1+u^{-1} \bar{t}
\end{pmatrix}, \quad
v_2 = \begin{pmatrix}
0 \\ 0 \\ 1+u^{-1} \bar{t} + u \bar{s} \\ 1+u \bar{s}
\end{pmatrix}, \quad
v_3 = \begin{pmatrix}
1 \\ 0 \\ 0 \\ 0
\end{pmatrix}, \quad
v_4 = \begin{pmatrix}
1 \\ 1 \\ 1 \\ 1
\end{pmatrix}
\] 
 forms a basis of $A^4$. To see this, it suffices to compute the determinant of the relevant matrix modulo the unique maximal ideal $(\bar{s},\bar{t})$ of $A$, which is straightforward.
 
 Using the fact that $(1+u\bar{s})^2=1$, hence $(1+u^{-1} \bar{t} + u \bar{s})(1+u\bar{s})=1+u^{-1}\bar{t}$ and similar computations one can check that the matrix of the diagonal bilinear form $\langle 1, 1+u^{-1} \bar{t}, 1+u\bar{s}, 1+u^{-1}\bar{t} + u\bar{s} \rangle$ with respect to this basis is given by
 \[
 \begin{pmatrix}
 u \bar{s} & 1 & 0 & 0 \\ 1 & u^{-1} \bar{t} & 0 & 0 \\ 0 & 0 & 1 & 1 \\ 0 & 0 & 1 & 0
 \end{pmatrix} \cong A(u\bar{s},u^{-1} \bar{t}) \perp A(1,0) \smash{\rlap{.}}
 \]
 Specializing this to $u=1$ we find that
 \[
\llangle 1+\bar{t}, 1+\bar{s} \rrangle \cong A(\bar{s},\bar{t}) \perp A(1,0)  
 \]
 holds.
 \end{proof}

 We have thus reduced the problem to showing that the two symmetric bilinear spaces
 \[
 A(\bar{s},\bar{t}) \quad \text{and} \quad A(u\bar{s}, u^{-1} \bar{t})
 \]
 represent distinct elements in the Witt ring $W \bigl(k[s,t] \slash (s^2,st,t^2)\bigr)$. We first show that they are not isomorphic if $u$ is not a square in $k$. Recall that the \emph{norm group} $\mathfrak{g}E$ of a symmetric bilinear space $E$ over $A$, with symmetric bilinear form $B$, is the subgroup of $(A,+)$ generated by the elements $B(v,v)$, $v \in E$.
 
 \begin{lemma}\label{lemma:not_isomorphic}
 The two bilinear spaces
 \[
 A(\bar{s},\bar{t}) \quad \text{and} \quad A(u\bar{s}, u^{-1} \bar{t})
 \]
 over $A=k[s,t] \slash (s^2,st,t^2)$ are not isomorphic if $u \in k \setminus k^{2}$.
 \end{lemma}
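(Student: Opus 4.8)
The plan is to separate the two spaces using their norm groups, exactly the invariant flagged just before the statement. First I would record the structure of $A = k[s,t]/(s^2,st,t^2)$: it is local with maximal ideal $\mathfrak{m} = k\bar{s} \oplus k\bar{t}$, residue field $k$, and $\mathfrak{m}^2 = 0$. The decisive elementary fact is that for $p \in A$, if $p_0 \in k$ denotes the image of $p$ in $A/\mathfrak{m} = k$, then $p^2 = p_0^2$, and hence $\lambda p^2 = \lambda p_0^2$ for every $\lambda \in \mathfrak{m}$; this is because $k$ has characteristic $2$, so squaring is additive, and $\mathfrak{m}^2 = 0$. Consequently, for $\lambda,\mu \in \mathfrak{m}$ and a vector $v = (p,q) \in A^2$, the diagonal value of the form $A(\lambda,\mu)$ is $B(v,v) = \lambda p^2 + 2pq + \mu q^2 = \lambda p_0^2 + \mu q_0^2$, the cross term disappearing in characteristic $2$.

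Next I would compute both norm groups from this formula. Letting $(p_0,q_0)$ range over $k \times k$ and using that $k^2 = \{c^2 : c \in k\}$, and therefore also $uk^2$ and $u^{-1}k^2$, is a subgroup of $(k,+)$ in characteristic $2$, one sees that the set of diagonal values is already a subgroup of $\mathfrak{m}$, so
\[
\mathfrak{g}\bigl(A(\bar{s},\bar{t})\bigr) = k^2\bar{s} \oplus k^2\bar{t}, \qquad
\mathfrak{g}\bigl(A(u\bar{s},u^{-1}\bar{t})\bigr) = uk^2\bar{s} \oplus u^{-1}k^2\bar{t},
\]
the internal direct sums being taken inside $\mathfrak{m} = k\bar{s} \oplus k\bar{t}$. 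Finally, since an isomorphism of symmetric bilinear spaces identifies the sets $\{B(v,v)\}$ and hence the norm groups, an isomorphism $A(\bar{s},\bar{t}) \cong A(u\bar{s},u^{-1}\bar{t})$ would give $k^2\bar{s} \oplus k^2\bar{t} = uk^2\bar{s} \oplus u^{-1}k^2\bar{t}$ as subgroups of $A$; projecting onto the $\bar{s}$-summand of $\mathfrak{m}$ yields $k^2 = uk^2$ inside $k$, and since $1 \in k^2$ this forces $u \in uk^2 = k^2$, contradicting the hypothesis $u \in k \setminus k^2$.

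There is no serious obstacle here; the argument is entirely elementary. The only point deserving a sentence of care is the claim that the set of diagonal values $\{\lambda p_0^2 + \mu q_0^2 : p_0,q_0 \in k\}$ is already a subgroup of $(A,+)$ — this rests on the additivity of $c \mapsto c^2$ over a field of characteristic $2$ — so that it genuinely equals the norm group rather than merely generating it.
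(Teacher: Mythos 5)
Your argument is correct and follows essentially the same route as the paper's own proof: both compute the norm groups via the characteristic-$2$ / $\mathfrak m^2=0$ identity $B(v,v)=\lambda p_0^2+\mu q_0^2$, observe (using additivity of Frobenius) that the resulting set is already a subgroup, and then separate the two norm groups by a square-class argument in $k$. The only cosmetic difference is that the paper exhibits the explicit element $u\bar s$ lying in one norm group but not the other, whereas you project onto the $\bar s$-summand and compare $k^2$ with $uk^2$; these are the same observation.
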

 
 \begin{proof}
 Clearly isomorphic bilinear spaces have equal norm groups, so we will show that the norm groups of these two spaces differ. We have for all $\alpha, \beta \in A$
\begin{align*}
\begin{pmatrix}
\alpha & \beta
\end{pmatrix}
\begin{pmatrix}
u\bar{s} & 1 \\ 1 & u^{-1}\bar{t}
\end{pmatrix}
\begin{pmatrix}
\alpha \\ \beta
\end{pmatrix} &=
\begin{pmatrix}
\alpha & \beta
\end{pmatrix}
\begin{pmatrix}
 \alpha u \bar{s} + \beta \\ \alpha + \beta u^{-1} \bar{t}
\end{pmatrix}\\
&= \alpha^2 u \bar{s}+\alpha \beta+\alpha \beta+\beta^2 u^{-1} \bar{t} \\
&=\alpha^2 u \bar{s}+\beta^2 u^{-1} \bar{t}
\end{align*}
since $k$ has characteristic $2$. Writing $\alpha=\alpha_0+\alpha_1 \bar{s} + \alpha_2 \bar{t}$ with $\alpha_i \in k$, we find that $\alpha^2=\alpha_0^2$. Similarly we have $\beta^2=\beta_0^2$ for some $\beta_0 \in k$. Since the Frobenius is a homomorphism, the set
\[
\{ \alpha_0^2 u \bar{s}+\beta_0^2 u^{-1} \bar{t} \; \vert \; \alpha_0, \beta_0 \in k \}
\]
is a subgroup of $(A,+)$, hence it is equal to the norm group $\mathfrak{g} A(u\bar{s},u^{-1} \bar{t})$. Specializing to $u=1$ we find that
\[
\mathfrak{g}A(\bar{s},\bar{t})= \{ \alpha_0^2 \bar{s}+\beta_0^2 \bar{t} \; \vert \; \alpha_0, \beta_0 \in k \}
\]
holds. Thus if $u$ is not a square, then $u\bar{s}$ lies in $\mathfrak{g}A(u\bar{s},u^{-1}\bar{t})$, but not in $\mathfrak{g}A(\bar{s},\bar{t})$.
 \end{proof}
 
 We can now apply the following theorem of Knebusch. Let $A$ be a local dyadic ring (meaning that $2$ is not a unit in $A$), with maximal ideal $\mathfrak{m}$. Let $\mathfrak{c}$ be the ideal generated by the squares $\mathfrak{m}^{(2)} \subseteq \mathfrak{m}$ and $2\mathfrak{m}$.
 
 \begin{thm}[Knebusch]\label{thm:Knebusch}
 Suppose that $\mathfrak{c}=0$. If $E$ and $F$ are two nondegenerate anisotropic bilinear spaces which represent equal elements in the Witt ring $W(A)$, then $E$ and $F$ are isomorphic.
 \end{thm}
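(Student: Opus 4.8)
This is a theorem of Knebusch, so the proof I propose is essentially to quote the relevant result from \cite{KNEBUSCH}; let me describe the shape of the argument. The statement is a cancellation (Witt-uniqueness) theorem, and it rests on two facts about nondegenerate bilinear spaces over a local dyadic ring $A$ with $\mathfrak{c} = 0$: first, a \emph{decomposition theorem}, that every such space is an orthogonal sum of an anisotropic space and a metabolic space; and second, a \emph{cancellation theorem}, that an isometry $E' \perp M \cong F' \perp N$ with $E'$, $F'$ anisotropic and $M$, $N$ metabolic forces $E' \cong F'$. Granting the cancellation theorem, the statement is immediate: by the definition of the Witt ring, $[E] = [F]$ means that $E \perp M \cong F \perp N$ for suitable metabolic spaces $M$, $N$, and since $E$ and $F$ are already anisotropic we conclude $E \cong F$. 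The decomposition theorem is used inside the proof of the cancellation theorem, to produce the anisotropic summands in the first place.

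The hypothesis $\mathfrak{c} = 0$ enters through a rigidity property of the norm. If $w$ is a vector with all coordinates in $\mathfrak{m}$, then
\[
B(v + w, v + w) - B(v,v) = 2 B(v,w) + B(w,w)
\]
lies in the ideal generated by $\mathfrak{m}^{(2)}$ and $2\mathfrak{m}$, hence in $\mathfrak{c} = 0$; thus $B(v,v)$ depends only on the image of $v$ in $E/\mathfrak{m}E$. This control of the norm on cosets modulo $\mathfrak{m}E$ is what lets one split off metabolic planes in a controlled way and, more delicately, cancel them; without $\mathfrak{c} = 0$ both fail.

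I expect the cancellation theorem to be the real obstacle. Over a field it is Witt cancellation for symmetric bilinear forms, but over $A$ one must propagate isometries along the $\mathfrak{m}$-adic filtration and rule out the degenerations that appear as soon as $\mathfrak{c} \neq 0$ --- for instance $A(0,0)$ and $A(0,\lambda)$ with $\lambda \in \mathfrak{m}$, $\lambda \neq 0$, are non-isometric metabolic planes, so it is the anisotropy hypothesis that supplies the rigidity one needs. Carrying this out is the technical heart of Knebusch's treatment of dyadic rings, and rather than reproduce it I would cite the corresponding result in \cite{KNEBUSCH}.
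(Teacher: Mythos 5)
Your proof takes essentially the same route as the paper: both simply cite \cite[Theorem~8.2.1]{KNEBUSCH} as the source of the result. Your additional sketch of the decomposition and cancellation theorems for local dyadic rings and of how $\mathfrak{c}=0$ rigidifies the norm $B(v,v)$ on cosets modulo $\mathfrak{m}E$ is an accurate and helpful gloss on what Knebusch's argument does, but the paper's own proof is just the one-line citation.
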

 
 \begin{proof}
 This is \cite[Theorem~8.2.1]{KNEBUSCH}.
 \end{proof}

 For $A=k[s,t] \slash (s^2,st,t^2)$, we have $\mathfrak{c}=0$ since $\mathfrak{m}^2=0$ and $2=0$, so the above theorem is applicable in our case of interest. We can finally prove the main result of this section.
 
 \begin{thm}\label{thm:nontrivial_in_quotient}
 Let $u \in k \setminus k^{2}$. Then the image $\bar{z}$ of the element
 \[
 z=x_{\alpha}(us)x_{-\alpha}(u^{-1}t)x_{\alpha}\bigl((1+u)s\bigr)x_{-\alpha}(t) x_{\alpha}(s)x_{-\alpha} \bigl((1+u^{-1})t\bigr) \in \mathrm{StSp}(k[s,t])
 \]
 in $\mathrm{StSp}\bigl(k[s,t] \slash (s^2,st,t^2)\bigr)$ is nontrivial.
 \end{thm}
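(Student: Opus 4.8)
The plan is to assemble the ingredients developed in this section. Write $A \defl k[s,t]/(s^2,st,t^2)$. By Lemma~\ref{lemma:final_form} the image $\bar{z}$ lies in the subgroup $K_2 \mathrm{Sp}(A) \subseteq \mathrm{StSp}(A)$, and is given by $\bar{z} = \{1+u^{-1}\bar{t}, 1+u\bar{s}\}_{\alpha} \{1+\bar{t},1+\bar{s}\}_{\alpha}^{-1}$; it therefore suffices to show that $\bar{z}$ is nontrivial in $K_2 \mathrm{Sp}(A)$. Since $u \in k \setminus k^2$ forces $k$ to be infinite, the local ring $A$ has infinite residue field and is hence $U$-irreducible, so Corollary~\ref{cor:homomorphism_to_Witt_ring} furnishes a homomorphism $\rho \colon K_2 \mathrm{Sp}(A) \to W(A)$, and it is enough to prove $\rho(\bar{z}) \neq 0$.

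By Lemma~\ref{lemma:image_of_zbar_in_WA} we have $\rho(\bar{z}) = A(u\bar{s}, u^{-1}\bar{t}) - A(\bar{s},\bar{t})$ in $W(A)$. I would argue by contradiction: assume this vanishes, so that $A(u\bar{s}, u^{-1}\bar{t})$ and $A(\bar{s},\bar{t})$ represent the same class in the Witt ring. To apply Theorem~\ref{thm:Knebusch} I would first verify its hypotheses. Both spaces are nondegenerate, since the matrix $\bigl(\begin{smallmatrix}\lambda & 1 \\ 1 & \mu\end{smallmatrix}\bigr)$ has determinant $\lambda\mu - 1$, which equals $-1 \in A^{\times}$ because $\bar{s}\bar{t} = 0$. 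Both are also anisotropic, in the sense that no unimodular vector is isotropic: with the notation of the proof of Lemma~\ref{lemma:not_isomorphic}, a unimodular vector $v = (\alpha,\beta)$ has $\alpha_0 \in k^{\times}$ or $\beta_0 \in k^{\times}$, and its square $\alpha_0^2 u\bar{s} + \beta_0^2 u^{-1}\bar{t}$ is then a nonzero element of $A$ because $\bar{s}$ and $\bar{t}$ are $k$-linearly independent in $A$. Finally, $A$ is local dyadic with $\mathfrak{m}^2 = 0$ and $2 = 0$, so that $\mathfrak{c} = 0$, as already observed.

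Theorem~\ref{thm:Knebusch} then yields an isomorphism $A(u\bar{s}, u^{-1}\bar{t}) \cong A(\bar{s},\bar{t})$, contradicting Lemma~\ref{lemma:not_isomorphic}. Hence $\rho(\bar{z}) \neq 0$, so $\bar{z}$ is a nontrivial element of $\mathrm{StSp}(A)$, which proves the theorem. I do not expect a serious obstacle at this point: the argument is a direct combination of the lemmas already proved, and the only genuine check is the verification that the two anisotropic bilinear spaces involved satisfy the hypotheses of Knebusch's criterion — in particular that they are nondegenerate and have no isotropic unimodular vectors.
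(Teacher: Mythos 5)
Your proof is correct and follows the same route as the paper: express $\rho(\bar z)$ via Lemma~\ref{lemma:image_of_zbar_in_WA}, verify nondegeneracy and anisotropy of the two rank-two spaces, invoke Theorem~\ref{thm:Knebusch}, and contradict Lemma~\ref{lemma:not_isomorphic}. Your determinant check (noting $\lambda\mu - 1 = -1 \in A^\times$, which is $1$ in characteristic $2$) and your anisotropy argument (a unimodular vector has $\alpha_0 \in k^\times$ or $\beta_0 \in k^\times$, so $\alpha_0^2 u\bar s + \beta_0^2 u^{-1}\bar t \neq 0$) match the paper's reasoning, just spelled out a little more explicitly.
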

 
 \begin{proof}
 We have already reduced the claim to checking that
 \[
  A(\bar{s},\bar{t}) \quad \text{and} \quad A(u\bar{s}, u^{-1} \bar{t})
 \]
 represent distinct elements in $W\bigl(k[s,t]\slash (s^2,st,t^2)\bigr)$, so suppose that they are equal in the Witt ring. Since the determinants of the two defining matrices are both equal to $1$, these spaces are nondegenerate. In order to apply Knebusch's theorem, we need to check that these spaces are anisotropic, that is, the bilinear form does not vanish on any direct summand. Since $A=k[s,t] \slash (s^2,st,t^2)$ is local, each proper direct summand is given by a split monomorphism
 \[
 \begin{pmatrix}
 a \\ b
\end{pmatrix} \colon  A \rightarrow A^2
 \]
 and the locality of $A$ implies that one of $a,b$ is a unit. From the computation at the beginning of the proof of Lemma~\ref{lemma:not_isomorphic} it follows that the bilinear form does not vanish on this vector, so these spaces are indeed anisotropic. From Theorem~\ref{thm:Knebusch} it follows that the two spaces are isomorphic. But this contradicts Lemma~\ref{lemma:not_isomorphic}. Thus $  A(\bar{s},\bar{t})$ and $A(u\bar{s}, u^{-1} \bar{t})$ represent different elements in the Witt ring.
 
  From Lemma~\ref{lemma:image_of_zbar_in_WA} we know that
 \[
 \rho(\bar{z})=A(u\bar{s}, u^{-1} \bar{t})-A(\bar{s},\bar{t}) \in W(A) \smash{\rlap{,}}
 \]
 so $\rho(\bar{z})$ is a nonzero element of the Witt ring. It follows that $\bar{z} \in K_2 \mathrm{Sp}(A) \subseteq \mathrm{StSp}(A)$ is nontrivial.
\end{proof}

\section{Counterexamples}\label{section:counterexamples}

 We again fix a field $k$ of characteristic $2$ and we let $R \defl k[a,x,y] \slash a^2+xy$. Recall that for every $u \in k$, we have the matrix
 \[
 M(u)=\begin{pmatrix}
 1+ (1+u)a^2 & (1+u)(1+a)y \\ (1+u^{-1})(1+a)x & 1+(1+u^{-1})a^2
 \end{pmatrix}
 \begin{pmatrix}
 1 & 0 \\ (1+u^{-1})x & 1
 \end{pmatrix}
 \]
 in $\mathrm{SL}_2(R)$. We can now prove the following theorem.
 
 \begin{thm}\label{thm:matrix_nontrivial_in_K1Sp}
 If $u$ is not a square, then $[M(u)] \in K_1 \mathrm{Sp}(R)$ is nontrivial.
 \end{thm}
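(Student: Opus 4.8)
The plan is to assemble the three principal results established above into a single statement. Proposition~\ref{prop:gubeladze} already does the heavy lifting: it reduces the nontriviality of $[M(u)] \in K_1\mathrm{Sp}(R)$ to two inputs, namely (1) the homotopy invariance statement that the inclusion $k \to k[s,t]$ induces an isomorphism $K_2\mathrm{Sp}(k) \to K_2\mathrm{Sp}(k[s,t])$, and (2) the nontriviality of the image $\bar z$ of $z$ in $\mathrm{StSp}\bigl(k[s,t]\slash(s^2,st,t^2)\bigr)$ whenever $u \in k \setminus k^2$. So the only remaining task is to observe that both hypotheses are now available.

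First I would invoke Corollary~\ref{cor:homotopy_invariance}, which supplies hypothesis~(1) for our fixed field $k$ of characteristic $2$ (it holds for an arbitrary field). Then I would invoke Theorem~\ref{thm:nontrivial_in_quotient}, which gives hypothesis~(2): for $u \in k \setminus k^2$, the image $\bar z \in \mathrm{StSp}\bigl(k[s,t]\slash(s^2,st,t^2)\bigr)$ of $z$ is nontrivial. With both assumptions verified, Proposition~\ref{prop:gubeladze} applies directly and yields that $[M(u)] \in K_1\mathrm{Sp}(R)$ is nontrivial whenever $u$ is not a square, which is exactly the claim.

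I do not expect a genuine obstacle at this stage: the theorem is a bookkeeping combination of Proposition~\ref{prop:gubeladze}, Corollary~\ref{cor:homotopy_invariance}, and Theorem~\ref{thm:nontrivial_in_quotient}, with all the substantive work already carried out earlier — the Gubeladze-style descent argument reducing to the quotient ring, Land's homotopy-invariance input via Grothendieck--Witt and $L$-theory, and the Witt-ring computation using van der Kallen's presentation of $K_2\mathrm{Sp}$ together with Knebusch's classification of anisotropic bilinear spaces over local dyadic rings with $\mathfrak{c}=0$. The one point worth confirming while writing the argument is that the standing hypothesis ``$u$ is not a square'' is literally the condition $u \in k \setminus k^2$ demanded by both Proposition~\ref{prop:gubeladze} and Theorem~\ref{thm:nontrivial_in_quotient}, and that the existence of such a $u$ forces $k$ to be infinite — so that $k[s,t]\slash(s^2,st,t^2)$ is a local ring with infinite residue field and hence $U$-irreducible, as needed for the homomorphism $\rho$ to the Witt ring. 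Granting this, the proof is a one-line appeal to the cited results.
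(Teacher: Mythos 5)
Your proposal matches the paper's proof exactly: both simply feed Corollary~\ref{cor:homotopy_invariance} and Theorem~\ref{thm:nontrivial_in_quotient} into the two hypotheses of Proposition~\ref{prop:gubeladze} and conclude. The extra remarks you make (that ``not a square'' means $u \in k\setminus k^2$, and that this forces $k$ infinite so the needed $U$-irreducibility holds) are correct and have already been verified in the paper's preceding discussion, so nothing is missing.
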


\begin{proof}
 In order to apply Proposition~\ref{prop:gubeladze}, we need to show that homotopy invariance holds and that the image $\bar{z}$ of $z \in \mathrm{StSp}(k[s,t])$ is nontrivial in the symplectic Steinberg group $\mathrm{StSp}\bigl(k[s,t] \slash (s^2,st,t^2)\bigr)$. The first claim follows from Corollary~\ref{cor:homotopy_invariance} and the second claim is established in Theorem~\ref{thm:nontrivial_in_quotient}.
\end{proof} 

 Using this, we can show that there are counterexamples to the Hermite ring conjecture.
 
 \begin{prop}\label{prop:nonfree_stably_free_example}
 Let $R^{\prime}=k[a,x,y,t] \slash \bigl( t^2+t(a^2+xy) \bigr)$. If $k$ is not perfect, then there exists a projective $R^{\prime}$-module $P$ such that $P \oplus R^{\prime} \cong (R^{\prime})^3$, but $P$ itself is not free.
 \end{prop}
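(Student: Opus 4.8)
The plan is to realize $R' = k[a,x,y,t]/\bigl(t^2 + t(a^2+xy)\bigr)$ as a Milnor patching of two rings over which the relevant projective module trivializes, using the matrix $M(u)$ from Theorem~\ref{thm:matrix_nontrivial_in_K1Sp} as the patching datum. First I would observe that $R'$ sits in a Milnor square: inverting $t$ on one side and inverting $1+a^2+xy$ (equivalently, working modulo $t$, so that $t$ becomes $0$) on the other, with common overlap a localization of $R = k[a,x,y]/(a^2+xy)$. Concretely, setting $f = t$ and $g = t + a^2 + xy$, one has $fg = 0$ in $R'$ and $f + g = a^2 + xy + 2t$, which in characteristic $2$ equals $a^2+xy$; the element $1 - f = 1 - t$ together with the structure of the quotient gives the conductor-square decomposition $R' = R'/(t) \times_{(R'/(t))[?]} R'[t^{-1}]$-style presentation. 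The cleaner route is: $R'/(t) \cong R$ and $R'/(t + a^2+xy) \cong R$ as well (both are quotients by one of the two factors of $t(t+a^2+xy)$), and these glue over $R'/(t, t+a^2+xy) \cong R/(a^2+xy) \cong k[x,y]$-type overlap after inverting the appropriate element. I would set this up so that the overlap ring is $R$ with $a^2+xy$ inverted — but in fact in $R$ we have $a^2+xy = 0$, so more care is needed; the correct overlap is $R[(1+a^2+xy)^{-1}]$ or one passes through $R[t^{-1}]$ directly. The point is that $R'$ is the fiber product realizing the standard situation where $\mathrm{SL}_2$ of the overlap ring acts to produce rank-$2$ stably free modules, and $[M(u)]$ provides a class that is stably elementary (Lemma~\ref{lemma:matrix_1_stably_elementary}) hence the patched module $P$ satisfies $P \oplus R' \cong (R')^3$.

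The key steps in order: (1) write down the explicit Milnor square with bottom-right corner a ring $A$ over which $M(u)$ or a conjugate lives naturally, using the factorization $t(t+a^2+xy)=0$ in $R'$ — one summand kills $t$, recovering $R$, and patching data in $\mathrm{GL}_2(A)$ over the overlap produces a projective $R'$-module $P$; (2) invoke Milnor patching (e.g.\ \cite[Theorem~I.2.?]{LAM} or Bass) to get that $P$ is finitely generated projective of rank $2$, and that $P \oplus R' \cong (R')^3$ because $\begin{psmallmatrix} M_0(u) & 0 \\ 0 & 1\end{psmallmatrix}$ is elementary by Lemma~\ref{lemma:matrix_1_stably_elementary}, so the stabilized patching datum is trivial in $\mathrm{SL}_3$ and the patched rank-$3$ module is free; (3) show $P$ is not free: if it were, the patching datum $M(u)$ would represent the trivial class in $\mathrm{SL}_2(A)/\mathrm{E}_2(A)$-image coming from $\mathrm{SL}_2(R') \times \mathrm{SL}_2(R')$, and since $M(u)$ is moreover a \emph{symplectic} matrix, triviality of $[P]$ would force $[M(u)] = 0$ in $K_1\mathrm{Sp}(R)$ after pushing forward along an appropriate map; (4) derive a contradiction with Theorem~\ref{thm:matrix_nontrivial_in_K1Sp}, using that $k$ not perfect of characteristic $2$ supplies $u \in k\setminus k^2$.

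The main obstacle I expect is Step~(3): translating "the patched module $P$ is free" into "$[M(u)]$ is trivial in $K_1\mathrm{Sp}(R)$" rather than merely in $K_1(R)$. Triviality of $P$ only says directly that the patching datum can be written as a product of a matrix from $\mathrm{SL}_2(R'/(t))$ and one from $\mathrm{SL}_2(R'[t^{-1}])$; to extract symplectic information one must use that $M(u)$ is itself symplectic and exploit the structure of the Milnor square together with the connection to symplectic $K_1$ established in \cite[\S 8]{SCHAEPPI_SYMPLECTIC} and \cite[Theorem~1.3]{SCHAEPPI_SYMPLECTIC}. In effect this last step is exactly the content of the cited implication "if $K_1\mathrm{Sp}(R) \to K_1(R)$ is non-injective then the Hermite conjecture fails," and I would invoke that machinery rather than reprove it — the role of this proposition is to package Theorem~\ref{thm:matrix_nontrivial_in_K1Sp} together with Lemma~\ref{lemma:matrix_1_stably_elementary} into the explicit ring $R'$ and explicit module $P$. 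The remaining steps (1), (2), (4) are then routine: (1) is a direct verification in the quotient ring, (2) is Milnor patching plus Lemma~\ref{lemma:matrix_1_stably_elementary}, and (4) is immediate.
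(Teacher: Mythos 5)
Your overall plan — Milnor patching with $M(u)$ as the gluing datum, Lemma~\ref{lemma:matrix_1_stably_elementary} for stable freeness, Theorem~\ref{thm:matrix_nontrivial_in_K1Sp} for the contradiction — is the right one, and matches the paper. But there are two genuine problems in the details.

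First, the Milnor square itself. You write ``$R'/(t) \cong R$ and $R'/(t + a^2+xy) \cong R$,'' but this is wrong: both quotients are isomorphic to the polynomial ring $k[a,x,y]$, not to $R = k[a,x,y]/(a^2+xy)$. The correct conductor square has $R'$ in one corner, $k[a,x,y]$ in \emph{both} middle corners (the quotients by the two coprime factors $t$ and $t+a^2+xy$), and $R$ as the bottom corner, with both maps $k[a,x,y] \to R$ being the canonical projection $\pi$. This is a standard conductor (Milnor) square since $(t) \cap (t+a^2+xy) = 0$ in $R'$ and one of the projections is surjective; there is no need for localizations, and the various ``$R[t^{-1}]$'' or ``$R[(1+a^2+xy)^{-1}]$'' detours you float are not the right framework and introduce spurious complications.

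Second, and more substantially, you correctly flag Step~(3) as the crux but then wave at it rather than identify the argument. The actual mechanism is short: if $P$ were free, Milnor patching says $M(u)$ lies in the image of $\pi \colon \mathrm{SL}_2(k[a,x,y]) \to \mathrm{SL}_2(R)$ (both legs of the square are $\pi$, so the usual product $\pi_1(\alpha)\pi_2(\beta)$ collapses to a single image condition). Since $\mathrm{SL}_2 = \mathrm{Sp}_2$, there is no ``extraction of symplectic information'' to be done — every such lift is automatically a symplectic matrix, so it represents a class in $K_1\mathrm{Sp}(k[a,x,y])$. The missing ingredient in your account is Kopeiko's theorem that $K_1\mathrm{Sp}$ of a polynomial ring over a field is trivial (\cite[Theorem~3.14]{KOPEIKO}); this forces $[M(u)] = 1$ in $K_1\mathrm{Sp}(R)$, contradicting Theorem~\ref{thm:matrix_nontrivial_in_K1Sp}. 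Without identifying that the lifting ring is the polynomial ring and citing the vanishing of its symplectic $K_1$, Step~(3) does not go through; appealing instead to the general implication from \cite{SCHAEPPI_SYMPLECTIC} about non-injectivity of $K_1\mathrm{Sp} \to K_1$ is not a proof of this concrete proposition, which is precisely what supplies the explicit ring and module the abstract statement promises.
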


 \begin{proof}
  The ring $R^{\prime}$ fits in the Milnor square
  \[
  \xymatrix{R^{\prime} \ar[r] \ar[d] & k[a,x,y] \ar[d]^{\pi} \\ k[a,x,y] \ar[r]_-{\pi} & k[a,x,y] \slash (a^2+xy)=R }
  \]
  where $\pi$ denotes the canonical projection. Fix a $u \in k$ which is not a square. The matrix $M(u) \in \mathrm{SL}_2(R)$ defines by Milnor patching a projective $R^{\prime}$-module $P$. From the fact that $\bigl(\begin{smallmatrix} M(u) & 0\\ 0 & 1 \end{smallmatrix} \bigr)$ is elementary (see Lemma~\ref{lemma:matrix_1_stably_elementary}) we conclude that $P \oplus R^{\prime}$ is free.
  
 To see that $P$ is not free, we need to show that $M(u)$ does not lie in the image of $\pi$. If it were, then we would have $[M(u)]=1 \in K_1 \mathrm{Sp}(R)$ since $K_1 \mathrm{Sp}(k[a,x,y])=1$ (see \cite[Theorem~3.14]{KOPEIKO}). This contradicts the nontriviality of $[M(u)]$ established in Theorem~\ref{thm:matrix_nontrivial_in_K1Sp}, so $P$ is indeed not free.
\end{proof}  
 
 The ring $R^{\prime}$ is graded, with $\lvert t \rvert=2$ and $\lvert a \rvert=\lvert x \rvert=\lvert y \rvert=1$. Thus there exists a ring homomorphism $h \colon R^{\prime} \rightarrow R^{\prime}[s]$ with $\mathrm{ev}_0 h=R^{\prime} \rightarrow k \rightarrow R^{\prime}$ and $\mathrm{ev}_1 h=\id$.
 
\begin{proof}[Proof of Theorem~\ref{thm:counterexample_to_Swan_question}]
If we fix a projective module $P$ as in Proposition~\ref{prop:nonfree_stably_free_example} above, it follows that $h_{\ast} P$ is a projective $R^{\prime}[s]$-module which is not extended from $R^{\prime}$. Indeed, otherwise $P \cong (\mathrm{ev}_1 h)_{\ast} P$ would be isomorphic to $(\mathrm{ev}_0 h)_{\ast} P$, hence free. 
\end{proof} 

 This module $h_{\ast } P$ thus answers Swan's Question (B) in \cite[p.~114]{SWAN} in the negative.
 
 \begin{cor}
 If $k$ is not perfect, then at least one of the rings 
 \[
 \{R^{\prime}_{\mathfrak{m}}[s] \; \vert \; \mathfrak{m} \subseteq R^{\prime} \; \text{maximal} \}
 \]
 is not a Hermite ring.
 \end{cor}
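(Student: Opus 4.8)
The plan is to deduce the corollary from the non-extended stably free module over $R'[s]$ constructed above, via Quillen's local--global patching theorem for extended modules. First I would recall that, since $k$ is not perfect, there is $u \in k \setminus k^2$, and then Proposition~\ref{prop:nonfree_stably_free_example} produces a finitely generated projective $R'$-module $P$ with $P \oplus R' \cong (R')^3$ which is not free, while the proof of Theorem~\ref{thm:counterexample_to_Swan_question} shows that $h_*P$ is not extended from $R'$. Extending scalars along $h$ in the isomorphism $P \oplus R' \cong (R')^3$ shows $h_*P \oplus R'[s] \cong (R'[s])^3$, so $h_*P$ is a (finitely generated projective, in fact) stably free $R'[s]$-module.

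Next I would argue by contradiction: suppose that $R'_\mathfrak{m}[s]$ is Hermite for every maximal ideal $\mathfrak{m}$ of $R'$. Because localization commutes with the formation of a polynomial ring, $(R'[s])_{R' \setminus \mathfrak{m}} \cong R'_\mathfrak{m}[s]$, and localizing $h_*P \oplus R'[s] \cong (R'[s])^3$ exhibits $(h_*P)_\mathfrak{m}$ as a stably free $R'_\mathfrak{m}[s]$-module; by the Hermite hypothesis it is then free, hence extended from $R'_\mathfrak{m}$. Since $h_*P$ is finitely generated projective, hence finitely presented, Quillen's patching theorem (see e.g.\ \cite[Chapter~V]{LAM}) applies, and as $h_*P$ becomes extended after localizing at every maximal ideal of $R'$ it follows that $h_*P$ is extended from $R'$ --- contradicting Theorem~\ref{thm:counterexample_to_Swan_question}. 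Hence at least one $R'_\mathfrak{m}[s]$ is not Hermite.

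I do not expect a genuine obstacle here: the only input beyond the results already established is Quillen's patching theorem in its local--global form, the rest being the routine observation that stable freeness passes to the localizations $R'[s] \to R'_\mathfrak{m}[s]$. I would additionally remark that each $R'_\mathfrak{m}$, being local, is a Hermite ring, so the corollary in fact yields a Hermite ring $S = R'_\mathfrak{m}$ with $S[s]$ not Hermite, which is a counterexample to the Hermite ring conjecture in its original formulation.
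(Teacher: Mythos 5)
Your argument matches the paper's: both deduce the corollary by contradiction, noting that if every $R'_{\mathfrak{m}}[s]$ were Hermite then each localization of $h_{\ast}P$ would be free, hence extended, and Quillen's local--global principle would then force $h_{\ast}P$ to be extended from $R'$, contradicting Theorem~\ref{thm:counterexample_to_Swan_question}. Your closing remark that each $R'_{\mathfrak{m}}$ is local hence Hermite is also exactly the observation the paper makes immediately after the corollary.
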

 
 \begin{proof}
 If all these rings were Hermite, then each module $(h_{\ast} P)_{\mathfrak{m}}$ would be free, hence extended from $R^{\prime}_{\mathfrak{m}}$. Quillen's local-global principle would therefore imply that $h_{\ast} P$ is extended from $R^{\prime}$, and we have noted above that this is not the case for $P$ as in Proposition~\ref{prop:nonfree_stably_free_example}.
 \end{proof}

 Since all local rings are Hermite, the above corollary shows that there are counterexamples to the Hermite ring conjecture. The following corollary shows that there are also counterexamples among rings with certain nice properties. 
 
 \begin{cor}
 There exist local unique factorization domains $A$ (which are also local rings of global complete intersections in a polynomial ring over $\mathbb{Z}$) such that $A[t]$ is not Hermite.
 \end{cor}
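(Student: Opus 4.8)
The plan is to make the previous corollary explicit over $\mathbb{Z}$ and then to replace the base ring by a factorial one.

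First I would take $k=\mathbb{F}_2(w)$, the rational function field over $\mathbb{F}_2$; it has characteristic $2$ and is not perfect, since $w\notin k^2$. The previous corollary then yields a maximal ideal $\mathfrak{m}$ of $R'=k[a,x,y,t]/\bigl(t^2+t(a^2+xy)\bigr)$ such that $R'_{\mathfrak{m}}[s]$ is not Hermite (with the parameter $u$ taken to be $w$, which is not a square). The ``complete intersection over $\mathbb{Z}$'' clause is then automatic: since $\mathbb{F}_2=\mathbb{Z}/(2)$ and $\mathbb{F}_2(w)$ is the fraction field of $\mathbb{F}_2[w]=\mathbb{Z}[w]/(2)$, the ring $R'$ is a localisation of $B:=\mathbb{Z}[w,a,x,y,t]/\bigl(2,\,t^2+t(a^2+xy)\bigr)$, and the pair $\bigl(2,\,t^2+t(a^2+xy)\bigr)$ is a regular sequence in $\mathbb{Z}[w,a,x,y,t]$ (the first element is a nonzerodivisor and the second is $t(t+a^2+xy)$, a nonzerodivisor in the domain $\mathbb{F}_2[w,a,x,y,t]$), so $B$ is a global complete intersection and every $R'_{\mathfrak{m}}=B_{\mathfrak{p}}$, with $\mathfrak{p}$ the contraction of $\mathfrak{m}$, is one of its local rings.

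What remains — and what I expect to be the main obstacle — is factoriality. Here one has to keep track of which $\mathfrak{m}$ is produced: $\Spec R'$ is the union of the two smooth hypersurfaces $V(t)$ and $V(t+a^2+xy)$, glued along $V(t,a^2+xy)\cong\Spec R$, and off this crossing locus $R'_{\mathfrak{m}}$ is a localisation of $k[a,x,y]$ — regular, hence a UFD, but then $R'_{\mathfrak{m}}[s]$ is Hermite by the Bass--Quillen theorem of Lindel; so the non-Hermite $\mathfrak{m}$ must lie on the crossing, where $R'_{\mathfrak{m}}$ has two minimal primes and is not even a domain. Since a non-trivial Milnor square always has reducible total space, this reducibility cannot be repaired by re-choosing the square. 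The route I would take is to re-run the construction of Sections~\ref{section:matrix}--\ref{section:nontriviality} over a higher-rank quadric cone, e.g.\ $\widetilde R=k[a,x_1,y_1,x_2,y_2]/(a^2+x_1y_1+x_2y_2)$, whose projective quadric is a smooth threefold even in characteristic $2$ and whose affine cone is therefore factorial (its divisor class group is $\operatorname{Pic}(Q_3)/\langle\mathcal{O}(1)\rangle=0$), in place of the cone $R=k[a,x,y]/(a^2+xy)$, whose class group is $\mathbb{Z}/2$; the symplectic matrix, its nontriviality in $K_1\mathrm{Sp}$ via an embedding into a polynomial ring, Lemma~\ref{lemma:matrix_1_stably_elementary}, and the grading homotopy should all go through unchanged. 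The genuine difficulty is then to descend the non-Hermiteness of the (still reducible) resulting total space, localised at a point over the crossing, to the non-Hermiteness of $A[t]$ for a bona fide local UFD $A$ — for instance via a suitable smoothing or partial normalisation of that local ring — and to verify its factoriality, say by a Grothendieck-type argument weighing the codimension of the singular locus against the complete-intersection property; the same reduction-mod-$2$ step as above then exhibits $A$ as a local ring of a complete intersection over $\mathbb{Z}$.
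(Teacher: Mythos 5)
Your proposal is a genuinely different route from the paper's, and it does not close the argument; in fact you say so yourself at the end. The paper's proof of this corollary is a one-line deduction: \cite[Theorem~1.4]{SCHAEPPI_LAURENT} is a \emph{reduction theorem} asserting that the Hermite ring conjecture holds in general if it holds for all $A[t]$ with $A$ a local UFD which is a local ring of a global complete intersection in a polynomial ring over $\mathbb{Z}$. Since the preceding corollary exhibits a failure of the Hermite ring conjecture, the contrapositive immediately produces a ring $A$ of the required form with $A[t]$ non-Hermite, without anyone having to point at it. The heavy lifting is entirely in the cited reduction theorem; the present corollary is just its contrapositive.

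Your approach instead tries to track down such an $A$ explicitly, and this runs into exactly the obstruction you identify but do not overcome. The non-Hermite localisations $R'_{\mathfrak{m}}$ coming from the previous corollary necessarily sit on the crossing $V(t)\cap V(t+a^2+xy)$, so they have two minimal primes and cannot be domains, let alone UFDs; and as you correctly note, this reducibility is intrinsic to Milnor patching and will persist if you swap $R=k[a,x,y]/(a^2+xy)$ for a higher-rank quadric cone. Your last paragraph --- ``descend the non-Hermiteness \ldots to the non-Hermiteness of $A[t]$ for a bona fide local UFD $A$ --- for instance via a suitable smoothing or partial normalisation'' --- is the entire content of the corollary and is left as a hope rather than an argument. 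There is no obvious way to make a smoothing or normalisation preserve the failure of Hermiteness, and indeed performing such a surgery without a general reduction principle is precisely what \cite[Theorem~1.4]{SCHAEPPI_LAURENT} spares one from doing. So the gap is real: the missing ingredient is the reduction theorem itself, not a finer analysis of the maximal ideals of $R'$.
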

 
 \begin{proof}
 We know from \cite[Theorem~1.4]{SCHAEPPI_LAURENT} that the Hermite ring conjecture would hold if it holds for all rings $A$ as in the statement.
 \end{proof}
 
\bibliographystyle{amsalpha}
\bibliography{hermite}

\end{document}